\documentclass[11pt,letterpaper]{article}
\usepackage{amsmath,amssymb,amsfonts,amsthm,bbm}
\usepackage{epic,eepic,longtable}
\usepackage{multirow,verbatim}
\usepackage{array}
\usepackage{graphicx}
\usepackage{setspace}
\usepackage{url}
\usepackage{color}
\usepackage{subcaption}

\textheight 8.5 in
\textwidth 6.5 in
\topmargin -0.5 in
\oddsidemargin -0.1 in

\makeatletter
\def\singlespace{\def\baselinestretch{1}\@normalsize}

\makeatletter
\def\singlespace{\def\baselinestretch{1}\@normalsize}


\numberwithin{equation}{section}

\renewcommand{\hat}{\widehat}

\renewcommand{\hat}{\widehat}

\newcommand{\bfm}[1]{\ensuremath{\mathbf{#1}}}

     \def\CC{\mathbb{C}}
     
\def\be{\bfm e}     \def\EE{\mathbb{E}}

   \def\bI{\bfm I}

     \def\PP{\mathbb{P}}
     
\def\br{\bfm r}     \def\RR{\mathbb{R}}
     
\def\bt{\bfm t}

\def\bx{\bfm x}   \def\bX{\bfm X}  
\def\by{\bfm y}     
\def\bz{\bfm z}     

\def\calA{{\cal  A}} 
\def\calB{{\cal  B}}

\def\calE{{\cal  E}} 
\def\calF{{\cal  F}}

\def\calN{{\cal  N}}

\def\calS{{\cal  S}}

\newcommand{\bfsym}[1]{\ensuremath{\boldsymbol{#1}}}

 \def\bbeta{\bfsym \beta}
              
 \def\bdelta{\bfsym {\delta}}

 \def\btheta{\bfsym {\theta}}           \def\bTheta {\bfsym {\Theta}}
           \def\bepsilon{\bfsym \varepsilon}
              \def\bSigma{\bfsym \Sigma}

 \def\bxi{\bfsym {\xi}}
 \def\bzeta{\bfsym {\zeta}}
 





\def\newpage{\vfill\eject}

\def\today{\ifcase\month\or
  January\or February\or March\or April\or May\or June\or
  July\or August\or September\or October\or November\or December\fi
  \space\number\day, \number\year}

\newdimen\biblioindent    \biblioindent=30pt

 at 8truept

\newcommand{\beq}{\begin{equation}}
  \newcommand{\eeq}{\end{equation}}
\newcommand{\beqn}{\begin{eqnarray}}
  \newcommand{\eeqn}{\end{eqnarray}}
\newcommand{\beqnn}{\begin{eqnarray*}}
  \newcommand{\eeqnn}{\end{eqnarray*}}

\allowdisplaybreaks
\setcounter{section}{0}

\setcounter{page}{1}
\usepackage{verbatim}
\pagestyle{plain}


\renewcommand{\baselinestretch}{1.66}
\baselineskip=22pt


\newtheorem{dfn}{Definition}

\newcounter{CondCounter}

\numberwithin{equation}{section}
\theoremstyle{plain}
\newtheorem{theorem}{Theorem}[section]

\theoremstyle{definition}
\newtheorem{remark}{Remark}

\theoremstyle{definition}

\theoremstyle{definition}
\newtheorem{lemma}{Lemma}

\newtheorem{corollary}{Corollary}[section]

\theoremstyle{definition}

\theoremstyle{definition}

\theoremstyle{definition}

\usepackage{mathtools}

\begin{document}
\title{A Fourier Analytical Approach to Estimation of Smooth Functions in Gaussian Shift Model}
\author{Fan Zhou and Ping Li\\
Baidu Research\\
10900 NE 8th St. Bellevue, WA 98004, USA\\
\{fanzhou,\ liping11\}@baidu.com
}
\date{}
\maketitle

\begin{abstract}
\noindent Let $\bx_j = \btheta + \bepsilon_j$, $j=1,\dots,n$ be i.i.d. copies of a Gaussian random vector $\bx\sim\calN(\btheta,\bSigma)$ with unknown mean $\btheta \in \RR^d$ and unknown covariance matrix $\bSigma\in \RR^{d\times d}$. The goal of this article is to study the
estimation of $f(\btheta)$ where $f$ is a given smooth function of which smoothness is characterized by a Besov-type norm.
The problem of interest resides in the high dimensional regime where the intrinsic dimension can grow with the sample size $n$.
Inspired by the classical work of A. N. Kolmogorov~\cite{kolmogorov1950unbiased} on unbiased estimation and Littlewood-Paley theory, we develop a new estimator based on a Fourier analytical approach that achieves effective bias reduction.
Asymptotic normality and efficiency are proved when the smoothness index of $f$ is above certain threshold which was discovered
recently by~\cite{koltchinskii2018efficient} for a H\"{o}lder type class. Numerical simulations are presented to validate our analysis. The simplicity of implementation and its superiority over the plug-in approach indicate
the new estimator can be applied to a broad range of real~world~applications.
\end{abstract}

\newpage

\section{Introduction}
\label{section: introduction}
Let
\begin{equation}
\label{model}
	\bx_j = \btheta + \bepsilon_j,~j=1,...,n
\end{equation}
be the Gaussian shift model with $\bx_j$ being i.i.d. noisy observations of an unknown parameter $\btheta \in \RR^d$, and $ \bepsilon_j \in \RR^d$ being i.i.d. copies of a non-degenerate Gaussian random noise vector $\bepsilon\sim \calN(0,\bSigma)$. The goal of this article is to study the problem of estimation of $f(\btheta)$ for a given smooth function $f: \RR^d \rightarrow \RR$.
$\bepsilon_j$'s can be treated as the measurement error when one observes $\btheta$. This indicates that the problem we study appears almost everywhere in real world applications.
Particularly, we are interested in the regime where the intrinsic dimension $\br(\bSigma)$ of the parameter can grow with the sample size.
To be more specific, we take the sample mean $\bar{\bx}= n^{-1}\sum_{j=1}^n \bx_j$ which is a sufficient statistic used to estimate $\btheta$ and then the model (\ref{model}) can be equivalently written as a form of $\bar{\bx}=\btheta + \bar{\bepsilon}$ with $\bar{\bepsilon} \sim \mathcal{N}(\mathbf{0}; n^{-1}\bSigma)$. In this article, we assume that $\|\bSigma\|_{op}= O(1)$, $\br(\bSigma) = n^{\alpha}$ with $\alpha\in (0,1)$ which grows as the sample size $n\rightarrow \infty$.

Studies under the setting where the dimension of the underlying parameter is allowed to grow with the sample size can be traced back to~\cite{portnoy1984asymptotic,portnoy1985asymptotic,portnoy1986asymptotic,mammen1989}.
Early results in the study of efficient estimation of smooth functionals are mostly focused on infinite-dimensional parameter space where people were trying to build the connection between the geometric complexity of the parameter space and the modulus of continuity of the functional.
Notable results include but not limited to~\cite{levit1976efficiency,levit1978asymptotically,ibragimov2013statistical,ibragimov1986some,bickel1988estimating,nemirovskii1991necessary,nemirovski2000topics,birge1995estimation,laurent1996efficient,lepski1999estimation}. Two types of special functionals are extensively studied. Results on estimation of linear functionals include~\cite{donoho1987minimax,donoho1991geometrizing,cai2005adaptive,klemela2001sharp} and the references therein.
Results in terms of estimation of quadratic functionals include~\cite{donoho1990minimax,cai2005nonquadratic,klemela2006sharp,laurent2000adaptive,bickel2003} and the references therein. Recent results show a surge of interest
in efficient and minimax optimal estimation of functionals of parameter in high dimensional models or models with growing dimension, see~\cite{collier2017minimax,van2014asymptotically,doi:10.1111/rssb.12026,koltchinskii2018efficient}.
Recent works~\cite{han2020estimation,goldenshluger2020minimax1,goldenshluger2020minimax2,collier2017minimax,wu2016minimax,wu2019chebyshev} that made impressive progress on minimax estimation of non-smooth functionals are also  interesting.

\newpage

We consider a given smooth function $f$ whose smoothness is characterized by a Besov-type norm $\|\cdot\|_{s,\infty,1}$.  Based on the fruitful idea of Littlewood-Paley theory and the seminal work by A. N. Kolmogorov~\cite{kolmogorov1950unbiased} on unbiased estimation, we construct a new estimator in Section~\ref{section: estimator} via a Fourier analytical approach and define it as
\begin{equation}
\label{estimator: intro}
g(\bar{\bx}) :=   \frac{1}{(2\pi)^{d/2}}   \int_{\Omega}  \mathcal{F} f(\bzeta)e^{\langle \bSigma \bzeta, \bzeta \rangle/2n} e^{i\bzeta \cdot \bar{\bx}} d\bzeta,
\end{equation}
where $i=\sqrt{-1}$, $\mathcal{F}f$ denotes the Fourier transformation of $f$, and $\Omega \subset \{ \bzeta \in \RR^d : \|\bzeta\|\leq R \}$ is a truncated region in the frequency domain.
The new estimator is easy to implement and can be widely used in practice since it deals with Fourier transform data.
For instance, Fourier transform is used widely throughout medical imaging where its applications include:
determining the spatial resolution of imaging systems, spatial localisation in magnetic resonance imaging, analysis of Doppler ultrasound signals,
and image filtering in emission and transmission computed tomography.
An immediate implication following the construction of $g(\bar{\bx})$ is that $g(\bar{\bx})$ is an unbiased estimator
of $f(\btheta)$ when $f$ is an entire function of exponential type. In Section~\ref{section:bias}, we show that for a general $f$ with smoothness index $s$, the bias of $g(\bar{\bx})$ is bounded by
\begin{equation}
\label{rate: bias}
	|\EE_{\btheta} g(\bar{\bx}) - f(\btheta)| \lesssim \|f\|_{s,\infty,1}\cdot R^{-s}.
\end{equation}
With an adaptively chosen $R$ based on different size of $\|f\|_{s,\infty,1}$, the bias can be controlled of a smaller order than $O(n^{-1/2})$ when $s>1/(1-\alpha)$.
Note that a recent series of works by~\cite{koltchinskii2018asymptotically,koltchinskii2019estimation} considered similar problems via a different approach. Specifically, they developed an innovative method through
iterative bootstrap to achieve bias reduction for a H\"{o}lder type class. Another work~\cite{jiao2017bias} used the similar approach as in~\cite{koltchinskii2018asymptotically} and~\cite{koltchinskii2018efficient} to study the estimation of smooth function of parameter of binomial model.

\newpage

In Section~\ref{section: normal}, we show that when $n^{-1/2}\sqrt{\langle \bSigma \nabla f(\btheta) , \nabla f(\btheta) \rangle} \asymp \|\bSigma\|_{op}\|f\|_{s,\infty,1}/\sqrt{n}$, the following asymptotic normality holds for the proposed estimator:
\begin{equation}
	\label{rate: variance}
	\frac{\sqrt{n}\big( g(\bar{\bx}) -f(\btheta)\big) }{  \sqrt{\langle \bSigma \nabla f(\btheta) , \nabla f(\btheta) \rangle} }  \Rightarrow \calN(0,1),~~{\rm as}~~n\rightarrow \infty,
\end{equation}
where $\calN(0,1)$ is the standard normal random variable and $ n^{-1}\cdot \langle \bSigma \nabla f(\btheta) , \nabla f(\btheta) \rangle$ is the reciprocal of the Fisher information for the estimation of $f(\btheta)$ based on the observation $\bx \sim \calN(\btheta, n^{-1}\bSigma)$.
It means that under mild restrictions $g(\bar{\bx})$ is normally distributed around the true parameter
$f(\btheta)$. Such results are critical in real world applications and provide us theoretical guarantees of building reliable confidence intervals containing the true parameter using the estimator. Together with (\ref{rate: bias}), (\ref{rate: variance})
implies that
\begin{equation}
	\label{rate: mse}
	\EE_{\btheta} \big( g(\bar{\bx}) - f(\btheta) \big)^2 \lesssim \big( \|f\|^2_{s,\infty,1}\cdot R^{-2s} \vee \|f\|^2_{s,\infty,1}\cdot n^{-1} \big).
\end{equation}

In Section~\ref{section: lower}, we establish several lower bounds. Especially, according to the lower bounds, the variance $ n^{-1}\cdot \langle \bSigma \nabla f(\btheta) , \nabla f(\btheta) \rangle$ in asymptotic normality is optimal when $n^{-1/2}\sqrt{\langle \bSigma \nabla f(\btheta) , \nabla f(\btheta) \rangle} \asymp \|\bSigma\|_{op}\|f\|_{s,\infty,1}/\sqrt{n}$. This essentially means that the proposed estimator $g(\bar{\bx})$ is asymptotically efficient.
Moreover, we show that if $\|f\|^2_{s,\infty,1} =O(1)$, the proposed estimator is minimax optimal with a
properly chosen $R$ under standard Gaussian shift model. Such minimax rates imply a sharp threshold
on smoothness which was recently discovered by~\cite{koltchinskii2018efficient} when studying a H\"{o}lder type class.
The proof of the lower bounds are based on some new construction and ideas which are different from the existing results.
Early results of this kind such as~\cite{ibragimov1986some,nemirovskii1991necessary,nemirovski2000topics} established the threshold on smoothness in terms of Kolmogorov widths which characterize the complexity of the parameter space.

In Section~\ref{section: adaptive}, we propose a data-driven estimator to address the adaptation issue when the covariance matrix $\bSigma$  in (\ref{estimator: intro}) is unknown and show that the difference between the adaptive estimator and
$g(\bar{\bx})$ is asymptotically negligible with high probability. Numerical simulation results are presented in Section~\ref{section: simulation} to validate our theory, showing that the new estimator's performance
is superior to its plug-in counterpart on both bias and variance reduction and building reliable confidence intervals.

\section{Preliminaries and Notations}
\label{section: prelim}
\subsection{Notations}

We use boldface uppercase letter $\bX$ to denote a matrix and boldface lowercase letter $\bx$ to denote a vector.
We use $\|\cdot\|$ to denote the $\ell_2$-norm of a vector, $\|\cdot\|_{op}$ to denote the spectral norm (largest singular value) of a matrix, and $\|\cdot\|_p$ to denote the $L^p$-norm of a function.
For a covariance matrix $\bSigma$, we use $\br(\bSigma): = {\rm tr}(\bSigma)/\|\bSigma\|_{op}$ to denote the effective rank (intrinsic dimension) of $\bSigma$. In the rest of this article, we assume
$\| \bSigma\|_{op} = O(1)$ which does not involve the sample size parameter $n$.
 We use $\calS = \calS(\RR^d)$ to denote the Schwartz space and $\calS'=\calS'(\RR^d)$ to denote the set of tempered distributions on $\RR^d$.
 We use $\mathcal{F}$ and $\mathcal{F}^{-1}$ to denote the Fourier transform (FT) and inverse Fourier transform (IFT) respectively. Throughout the paper, given nonnegative $a$ and $b$, $a\lesssim b$ means
that $a\leq Cb$ for a numerical constant $C$, and $a \asymp b$ means that $a \lesssim b$ and $b \lesssim a$. $a \wedge b = \min \{ a, b\}$ and $a \vee b = \max \{ a, b\}$.

\subsection{Besov-Type Norm and Entire Function of Exponential Type}
\label{function}
In this section, firstly we define a Besov-type norm of our interest.
Given $f:\RR^d \rightarrow \RR$, we define the following norm $\|\cdot \|_{s,\infty, 1}$ of $f$:
\begin{equation}
	\label{norm_f}
	\big\| f \big\|_{s,\infty, 1} :=  (2\pi)^{-d/2} \int \big| \calF f(\bzeta)\big|\big( 1 \vee \| \bzeta\|^s \big) d\bzeta.
\end{equation}
Especially, we denote by
\begin{equation}
	\calF^s(\RR^d) := \big\{ f: \big\| f \big\|_{s,\infty, 1} < \infty \big\}.
\end{equation}
Note that the parameter $s$ in (\ref{norm_f}) characterizes the smoothness of $f$. It is defined in a similar fashion as the smoothness parameter of Besov norm via Littlewood-Paley decomposition, see Section 2.3.1 in~\cite{Triebelfunction}. One should note that the above norm
$\big\| f \big\|_{s,\infty, 1}$ can easily depend on the dimension parameter $d$. One important example of such function is when
\begin{equation}
\label{mixture}
f(\btheta) = \int e^{-\|\btheta - \by\|^2/2} dG(\by)
\end{equation}
where $G(\by)$ is some distribution function in $\RR^d$. In this case,
\begin{equation}
	\big\| f \big\|_{s,\infty, 1} = (2\pi)^{-d/2} \int e^{-\|\bzeta\|^2/2}  \big( 1 \vee \| \bzeta\|^s \big) d\bzeta \leq \big( 1+o(1) \big)d^{s/2}.
\end{equation}

Smooth functions whose Fourier transform has compact support is central in constructing the estimator for our problem. These functions are closely related to the entire functions of exponential type. We introduce the definition as follows.
\begin{dfn}
	Let $f: \CC^d \rightarrow \CC$ be an entire function and $\sigma:=(\sigma_1,...,\sigma_d)$, $\sigma_j>0$. Function $f$ is of exponential type $\sigma$ if for any
	$\varepsilon >0 $ there exists a constant $C(\varepsilon,\sigma,f)>0$ such that
	\begin{equation}
		\big| f(\bz) \big| \leq C(\varepsilon,\sigma,f) e^{\sum_{j=1}^d (\sigma_j+\varepsilon)|z_j|},~\forall~\bz \in \CC^d.
	\end{equation}
\end{dfn}

The following theorem is part of the well-known Paley-Wiener-Schwartz theorem which offers an necessary and sufficient condition for us to identify entire functions of exponential type. We refer to Theorem 1.7.7 in~\cite{hormander2015analysis} for a more detailed discussion in case the reader is interested.
\begin{theorem}
	\label{paley-wiener}
	The following two assertions are equivalent:
	\begin{enumerate}
		\item $\varphi \in \mathcal{S}'$ and ${\rm supp}(\mathcal{F}\varphi)\subset \{\bx:\|\bx\|\leq \sigma \}$ is bounded;
		\item $\varphi(\bz)$ for all $\bz \in \CC^d$ is an entire function of exponential type $\sigma$.
	\end{enumerate}
\end{theorem}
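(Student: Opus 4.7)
The plan is to establish the two implications separately, with direction (1) $\Rightarrow$ (2) being the more constructive one. Assume $\varphi \in \mathcal{S}'$ with $\text{supp}(\mathcal{F}\varphi) \subset \{\|\bxi\| \leq \sigma\}$. Since $\mathcal{F}\varphi$ is a compactly supported tempered distribution it has finite order $N$ and extends to a continuous linear functional on $C^\infty(\RR^d)$. I would then \emph{define} the candidate entire extension by
\[
\varphi(\bz) \;=\; (2\pi)^{-d/2}\,\big\langle \mathcal{F}\varphi(\bxi),\ \chi(\bxi)\, e^{i\bz\cdot\bxi}\big\rangle,\qquad \bz \in \CC^d,
\]
where $\chi \in \mathcal{S}(\RR^d)$ is a smooth cutoff equal to $1$ in a neighborhood of $\{\|\bxi\| \leq \sigma\}$ and supported in $\{\|\bxi\| \leq \sigma + \varepsilon\}$. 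This coincides with Fourier inversion when $\bz \in \RR^d$, and holomorphy in $\bz$ follows because $\bz \mapsto \chi(\bxi)e^{i\bz\cdot\bxi}$ is holomorphic in the Schwartz topology. The exponential-type bound then comes from $|e^{i\bz\cdot\bxi}| \leq e^{\|\text{Im}\,\bz\|\cdot \|\bxi\|}$ on $\text{supp}(\chi)$, while the order-$N$ continuity of $\mathcal{F}\varphi$ contributes only a polynomial-in-$\bz$ prefactor; sending $\varepsilon \downarrow 0$ gives type exactly $\sigma$.

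For the converse (2) $\Rightarrow$ (1), the strategy is a contour-shift argument. Suppose $\varphi$ is entire of exponential type $\sigma$ and tempered on $\RR^d$. I would test against a $\psi \in \mathcal{S}$ with $\text{supp}(\psi) \subset \{\|\bxi\| > \sigma\}$ and use
\[
\langle \mathcal{F}\varphi,\psi\rangle \;=\; \langle \varphi,\mathcal{F}\psi\rangle \;=\; (2\pi)^{-d/2}\!\int_{\RR^d}\varphi(\bx)\,(\mathcal{F}\psi)(\bx)\,d\bx.
\]
Since $\varphi$ extends entirely, Cauchy's theorem allows the $\bx$-contour to be shifted to $\bx + i\lambda \bt$ for any fixed unit vector $\bt$. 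The Fourier kernel inside $\mathcal{F}\psi$ produces a factor $e^{-\lambda \bt \cdot \bxi}$ after a localized decomposition of $\psi$ around each point $\bxi_0$ of its support, while the exponential-type bound gives a competing $e^{\lambda\sigma\|\bt\|}$. Aligning $\bt$ with $\bxi_0$ so that $\bt\cdot\bxi_0 > \sigma$ makes the product decay as $\lambda \to \infty$, yielding $\langle \mathcal{F}\varphi,\psi\rangle = 0$, hence $\text{supp}(\mathcal{F}\varphi) \subset \{\|\bxi\| \leq \sigma\}$.

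The principal obstacle lies in the second direction: the shifted integrand is only conditionally manageable, because $\varphi\big|_{\RR^d}$ only has polynomial growth and the type bound grows exponentially in $\lambda$, while $\mathcal{F}\psi$ is Schwartz but its decay must be exploited jointly with the shift. The fix is to cover $\text{supp}(\psi)$ by finitely many cones where a fixed $\bt$ gives $\bt\cdot\bxi \geq \sigma + \delta$, deform the contour through a box whose vertical sides escape to infinity (where the Schwartz decay of $\mathcal{F}\psi$ dominates the polynomial-times-exponential growth of $\varphi$), and send $\lambda\to\infty$ on the horizontal shifted piece. Combining both implications yields the stated equivalence and hence the compact-support characterization needed later in the Fourier analytic construction of $g(\bx)$.
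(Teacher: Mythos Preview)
The paper does not actually prove this theorem. It is stated in Section~2.3 purely as background (the Paley--Wiener--Schwartz theorem) and immediately followed by a reference: ``We refer to Theorem 1.7.7 in~\cite{hormander2015analysis} for a more detailed discussion.'' So there is no proof in the paper to compare against; the authors simply quote the result and rely on H\"ormander.

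That said, your sketch is the classical argument and the forward direction $(1)\Rightarrow(2)$ is fine as written: pairing $\mathcal{F}\varphi$ against a cutoff times $e^{i\bz\cdot\bxi}$, holomorphy in $\bz$ via differentiation under the pairing, and the type bound from $|e^{i\bz\cdot\bxi}|\le e^{\|\mathrm{Im}\,\bz\|\,\|\bxi\|}$ together with the finite-order estimate are exactly the standard steps. For the converse you correctly identify the real difficulty, but your contour-shift description is a bit loose. Two points worth tightening: (i) the statement as quoted in the paper omits the hypothesis that $\varphi$ be polynomially bounded on $\RR^d$ (you add ``tempered on $\RR^d$'' yourself, which is right---without it $(2)\Rightarrow(1)$ is false, e.g.\ $\varphi(z)=e^{iz}$ in $d=1$); and (ii) the shift is usually justified not by a ``localized decomposition of $\psi$'' but by first showing, via a Phragm\'en--Lindel\"of or Cauchy-estimate argument, that the \emph{a priori} bound on $\varphi$ upgrades to $|\varphi(\bx+i\by)|\le C(1+|\bx|+|\by|)^N e^{\sigma|\by|}$ uniformly, after which the integral $\int \varphi(\bx+i\lambda\bt)\,\mathcal{F}\psi(\bx+i\lambda\bt)\,d\bx$ is absolutely convergent for every $\lambda$ and the shift is routine. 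Your cone-covering idea can be made to work, but the cleaner route is to get the uniform complex bound first and then deform.
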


\section{Bias Reduction and Estimator Construction}
\label{section: estimator}
In this section, we introduce a bias reducing estimator based on a Fourier analytical approach and the Gaussian kernel. The origin of this idea can be traced back to A.N. Kolmogorov~\cite{kolmogorov1950unbiased} in which the author tried to build the connection between unbiased estimation in Gaussian shift model with ``the inverse heat conductivity problem".
The intuition that lies behind the construction of the estimator is pretty straight forward. To find a good estimator $g(\bar{\bx})$ of $f(\btheta)$ with small bias depends on how well one can solve the following
integral equation
\begin{equation}
\label{equation: integral}
	\EE_{\btheta} g(\bar{\bx}) = f(\btheta).
\end{equation}
Instead of solving it directly which can be hard, we approximately solve it by replacing the right hand side with some good approximation of $f$. The proxy of $f$ is chosen via a proper truncation on $f$'s frequency domain based on the spirit of well-known Littlewood-Paley decomposition. A different approach based on a bootstrap chain bias reduction technique was developed recently by~\cite{koltchinskii2018asymptotically,koltchinskii2018efficient} to approximately solve (\ref{equation: integral}) which also achieves effective bias reduction. However, the implementation of this method can be quite difficult since such estimators oftentimes do not have an explicit form and to compute its approximate surrogate can be computationally intensive.
The bootstrap chain bias reduction technique is also used in~\cite{jiao2017bias} to estimate smooth functions of the parameter of binomial models.

To start with, we review some of the basic knowledge from PDE, and harmonic analysis. Given $\bx \sim \mathcal{N}(\btheta; \bSigma)$.
We denote the density function of $\bx$ by
$$
p(\bx|\btheta,\bSigma) := \frac{1}{\sqrt{(2\pi)^d  |\bSigma|}}\exp\Big\{  - \frac{1}{2} \langle \bSigma^{-1}(\bx-\btheta), \bx-\btheta\rangle \Big\},
$$
where $|\bSigma|$ denotes the determinant of $\bSigma$.
For any given estimator $g(\bar{\bx})$ of $f(\btheta)$, it is easy to check that
\begin{equation}
\label{expectation}
\mathbb{E}_{\btheta}g(\bar{\bx}) = \mathbb{E}_{\btheta}g(\btheta+\bar{\bepsilon}) =  \mathbb{E}_{\btheta}g(\btheta - \bar{\bepsilon}) =  \int_{\mathbb{R}^d} g(\btheta - \bzeta) p(\bzeta|{\bf 0}; n^{-1}\bSigma) d\bzeta.
\end{equation}
Note that the right hand side of (\ref{expectation}) is the convolution of $g$ with a Gaussian density $p$ with zero mean and covariance matrix $\bSigma$. We denote by $h$ this convolution
\begin{equation}
\label{convolution}
h(\btheta) := g \ast p^o (\btheta) =  \int_{\mathbb{R}^d} g(\btheta - \bzeta) p(\bzeta|{\bf 0}; n^{-1}\bSigma) d\bzeta,
\end{equation}
where $p^o(\bzeta):=p(\bzeta|{\bf 0}; n^{-1}\bSigma) $.
Recall that the Fourier Transform of a function $f:\mathbb{R}^d\rightarrow \mathbb{R}$ is defined as
$$
\mathcal{F}f(\bzeta) := \frac{1}{(2\pi)^{d/2}}\int_{\mathbb{R}^d} f(\btheta) e^{-i \bzeta\cdot \btheta}d\bx.
$$
Given (\ref{convolution}), the basic properties of Fourier transform and convolution (see~\cite{reed1975ii}, page 6) lead to
\begin{equation}
\label{conv2}
\mathcal{F} h = \mathcal{F} (g \ast p^o) = (2\pi)^{d/2} \mathcal{F}g \cdot \mathcal{F}p^o.
\end{equation}
It is easy to see that
$$
\mathcal{F} p^o(\bzeta)  = \frac{1}{(2\pi)^{d/2}} e^{-\frac{1}{2n}\langle \bSigma \bzeta, \bzeta \rangle}.
$$
Thus, from (\ref{conv2}) we have
$$
\mathcal{F} g (\bzeta) =\mathcal{F} h(\bzeta)e^{\langle \bSigma \bzeta, \bzeta \rangle/2n}.
$$
Now we take the Inverse Fourier Transform of $\mathcal{F}g(\bzeta)$ and obtain our estimator
\begin{equation}
\label{estimator: general}
g(\bar{\bx}) =\mathcal{F}^{-1} (\mathcal{F} g) =  \frac{1}{(2\pi)^{d/2}} \int_{\mathbb{R}^d} \mathcal{F} g (\bzeta) e^{ i\bzeta\cdot \bar{\bx}}d\bzeta =  \frac{1}{(2\pi)^{d/2}}   \int_{\mathbb{R}^d}  \mathcal{F} h(\bzeta)e^{\langle \bSigma \bzeta, \bzeta \rangle/2n} e^{i\bzeta \cdot \bar{\bx}} d\bzeta .
\end{equation}
One should note that the integral in (\ref{estimator: general}) can be meaningless when the integral on the right hand side diverges. Indeed, the term $e^{\langle \bSigma \bzeta, \bzeta \rangle/2n}$ inside the integral grows exponentially fast when $\|\bzeta\|$ goes to infinity. If $|\mathcal{F} h(\bzeta)|$ does not decay fast enough as $\|\bzeta\|$ goes to infinity, then $g(\bar{\bx})$ in (\ref{estimator: general}) may not be well-defined.

We consider a given function $f\in \calF^s(\RR^d)$. We use the following function $f^N$ to approximate $f$. We denote by
\begin{equation}
f^N(\btheta) := (2\pi)^{-d/2} \int_{\Omega} \calF f(\bzeta)\cdot e^{i\bzeta^T \btheta}  d\bzeta
\end{equation}
with $\Omega:= \{ \bzeta: \|\bzeta\| \leq R\}$ for some $0<R<\infty$. Since $f\in\calF^s(\RR^d)$, then it is easy to check that $f^N$ is well-defined.
With $R$ being finite, $f^N$ is an analytic function based on Paley-Wiener-Schwartz theorem (Theorem~\ref{paley-wiener}). Now we formally introduce the estimator
$g(\bar{\bx})$ of $f(\btheta)$ under model (\ref{model}) as the following:
\begin{equation}
\label{estimator: f}
g(\bar{\bx}) :=   \frac{1}{(2\pi)^{d/2}}   \int_{\Omega}  \mathcal{F} f^N(\bzeta)e^{\langle \bSigma \bzeta, \bzeta \rangle/2n} e^{i\bzeta \cdot \bar{\bx}} d\bzeta.
\end{equation}

An immediate implication of the above analysis is that when $h$ is an entire function of exponential type, then
due to an extension of Paley-Wiener Theorem to $\RR^d$ by E.M. Stein~\cite{stein1957functions}, $g(\bar{\bx})$ defined in (\ref{estimator: general}) is an unbiased estimator of $f=h$ under model (\ref{model}). We summarize this in the following theorem.

\begin{theorem}
	\label{theorem: unbias}
		Under model (\ref{model}), let $h:\CC^d\rightarrow \CC$ be an entire function of exponential-type $\sigma$.
		Then the estimator $g(\bar{\bx})$ defined in (\ref{estimator: general}) is an unbiased estimator of $h(\btheta)$.
\end{theorem}

\section{Bound on the Bias}
\label{section:bias}

In this section, we derive an upper bound on the bias of the estimator (\ref{estimator: f}) for a given $f\in \calF^s(\RR^d)$.
We show that the upper bound on the bias $\EE_{\btheta}g(\bar{\bx}) - f(\btheta)$ is characterized by its norm $\|f\|_{s,\infty,1}$, the smoothness parameter $s$, and the truncation radius $R$ in the frequency domain.

\begin{theorem}
	\label{theorem: bias}
	Under model (\ref{model}), assume that given $f\in \calF^s(\RR^d)$ with $s\geq 0$ and the estimator $g(\bar{\bx})$ defined as in (\ref{estimator: f}), we denote by $R:=\sup_{\bzeta\in \Omega}\|\bzeta\|$.
	Then the following bound on the bias holds:
	\begin{equation}
	\label{bound: bias}
		\big| \EE_{\theta} g(\bar{\bx}) - f(\btheta)\big| \leq  \|f\|_{s,\infty,1} \cdot R^{-s}.
	\end{equation}
	Especially, when $ \|f\|_{s,\infty,1}  \leq 1$, by taking $\br(\bSigma) = O(n^{\alpha})$ and $R = \sqrt{n/\br(\bSigma)}$, we have
	\begin{equation}
		\big| \EE_{\theta} g(\bar{\bx}) - f(\btheta)\big| \leq  n^{s\cdot (\alpha-1)/2}.
	\end{equation}
\end{theorem}

\begin{remark}
	Theorem~\ref{theorem: bias} shows that higher order smoothness (larger $s$) can contribute to better bias reduction. Especially, when $\big\|f\big\|_{s,\infty,1}  \leq 1$ and the intrinsic dimension of the parameter $\br(\bSigma) = O(n^{\alpha})$ for some $\alpha \in (0,1)$, to make $g(\bar{\bx})$ be a $\sqrt{n}$-consistent estimator in order to achieve asymptotic normality and efficiency, one needs $s > 1/(1-\alpha)$. Such threshold on smoothness was proved to be sharp by a recent work~\cite{koltchinskii2018efficient} over a H\"older-type space for Gaussian shift model.
	\end{remark}
	
  Interestingly, when $\big\|f\big\|_{s,\infty,1} = O(d^{s/2})$, namely the size of $f$ can depend on the dimension parameter $d$ such as in the mixture model (\ref{mixture}), one can choose a larger truncation parameter $R \asymp \sqrt{n}$ in order to achieve a similar rate of $O(n^{s\cdot (\alpha-1)/2})$ on bias.
  \begin{corollary}
  	\label{corollary: bias_d}
  	Under the same condition of Theorem~\ref{theorem: bias}, assume that $\big\|f\big\|_{s,\infty,1} = O(d^{s/2})$ with $s\geq 0$ and $d = O(n^{\alpha})$, and the estimator $g(\bar{\bx})$ defined in (\ref{estimator: f}) with $R = \sqrt{n}$.
  	Then the following bound on the bias holds with some constant $C_1$:
  	\begin{equation}
  		\label{bound: bias_d}
  		\big| \EE_{\theta} g(\bar{\bx}) - f(\btheta)\big| \leq C_1 n^{s\cdot (\alpha-1)/2}.
  	\end{equation}
  \end{corollary}

\begin{remark}
	Corollary~\ref{corollary: bias_d} shows that even when the norm $\big\|f\big\|_{s,\infty,1}$ is large and depends on $d$, the bias of the proposed estimator $g(\bar{\bx})$ can still be well controlled below $O(n^{-1/2})$ as long as $s>1/(1-\alpha)$.
\end{remark}

\section{Asymptotic Normality and Efficiency}
\label{section: normal}

In this section, we show the asymptotic normality of the estimator $g(\bar{\bx})$. Such results are of vital importance in terms of evaluation of an estimator in statistical inference both in theory and in practice. It can used to justify whether we can use the estimator to build reliable confidence intervals for estimation of the true parameter.

For a given function $f:\RR^d \rightarrow \RR$, we denote by
\begin{equation}
	\sigma^2_{f,\bepsilon}(\btheta):= n^{-1}\cdot \langle \bSigma \nabla f(\btheta) , \nabla f(\btheta) \rangle .
\end{equation}
Note that $\sigma^2_{f,\bepsilon}(\btheta)$ is the reciprocal of the Fisher information for the estimation of $f(\btheta)$ based on the observation
$\bar{\bx} \sim \calN(\btheta, n^{-1}\bSigma)$. According to the definition of $\|f\|_{s,\infty,1}$, for $s\geq 1$ term $\sigma_{f,\bepsilon}(\btheta)$
is naturally bounded by
\begin{equation}
	\sigma_{f,\bepsilon}(\btheta) \leq \|\bSigma\|_{op}\|f\|_{s,\infty,1}/\sqrt{n}.
\end{equation}
As we shall see in the following theorem, when $\sigma_{f,\bepsilon}(\btheta) \asymp \|\bSigma\|_{op}\|f\|_{s,\infty,1}/\sqrt{n}$, the estimator $g(\bar{\bx})$ is normally distribution around the true parameter $f(\btheta)$ with asymptotic variance $\sigma^2_{f,\bepsilon}(\btheta)$.
and moreover, such variance is optimal.
Recall that in Theorem~\ref{theorem: bias} we showed $\big| \EE_{\btheta}g(\bar{\bx}) - f(\btheta)\big|$ is uniformly of a small order when $s>1/(1-\alpha)$. As it turns out, such threshold on smoothness is also essential for us to establish asymptotic normality of the proposed estimator $g(\bar{\bx})$. As we shall see in Section~\ref{section: simulation}, our simulation results show that the confidence intervals built based on the estimator $g(\bar{\bx})$ is much more reliable
than those built based on the plug-in estimator $f(\bar{\bx})$.

\begin{theorem}
	\label{theorem: normal}
	Under model (\ref{model}), assume that for a given $f\in \calF^s(\RR^d)$ with $s>1$ and for some absolute constant $\tau>0$
	\begin{equation}
	\label{variance: size}
	  \frac{n^{-1/2}\big\| f \big\|_{s,\infty,1}\|\bSigma\|^{1/2}_{op}}{\sigma_{f,\bepsilon}(\btheta)} \leq \tau
	\end{equation}
	Take $\br(\bSigma) = O(n^{\alpha})$ with $a\in(0,1)$ and $R \leq \sqrt{n}$. Then if $s>1/(1-\alpha)$
	\begin{equation}
		\frac{g(\bar{\bx}) -f(\btheta)}{\sigma_{f,\bepsilon}(\btheta)} = \frac{\sqrt{n}\big( g(\bar{\bx}) -f(\btheta)\big) }{  \sqrt{\langle \bSigma \nabla f(\btheta) , \nabla f(\btheta) \rangle} }  \Rightarrow \calN(0,1),~~{\rm as}~~n\rightarrow \infty.
	\end{equation}
	where $\calN(0,1)$ is the standard normal random variable.
	Moreover,
	\begin{equation}
	\label{bound: efficiency}
	\begin{aligned}
	&\frac{ \EE_{\btheta}^{1/2} (g(\bar{\bx}) - f(\btheta))^2}{\sigma_{f,\bepsilon}(\btheta)} \rightarrow 1,~~{\rm as}~~n\rightarrow \infty.
	\end{aligned}
	\end{equation}
\end{theorem}

\begin{remark}
	Note that Theorem~\ref{theorem: normal} holds with different choices of $\|f\|_{s,\infty,1}$. For instance if $\|f\|_{s,\infty,1} = O(1)$ one can choose $R=\sqrt{n/\br(\bSigma)}$ and if $\|f\|_{s,\infty,1} = O(d^{s/2})$ one can choose $R=\sqrt{n}$. However,
	different choice of $\|f\|_{s,\infty,1}$ leads to different variance $\sigma^2_{f,\bepsilon}(\btheta)$ since condition (\ref{variance: size}) indicates $\sigma_{f,\bepsilon}(\btheta) \asymp \|\bSigma\|_{op}\|f\|_{s,\infty,1}/\sqrt{n}$. Indeed, together with Theorem
	\ref{theorem: bias}, Theorem~\ref{theorem: normal} shows that
	\begin{equation}
		\label{bound: MSE}
		 \EE_{\btheta}\big(g(\bar{\bx}) - f(\btheta)\big)^2 \lesssim \|f\|^2_{s,\infty,1}\cdot n^{-1} + \|f\|^2_{s,\infty,1}\cdot R^{-2s}.
	\end{equation}
    When $s>1/(1-\alpha)$, we have shown that the bias term $ \|f\|^2_{s,\infty,1}\cdot R^{-2s}$ can always be of a smaller order than $O(n^{-1})$ as long as $\|f\|_{s,\infty,1} = O(d^{s/2})$. However, the variance $\|f\|^2_{s,\infty,1}\cdot n^{-1}$ can be much larger than $O(n^{-1})$ when $\|f\|_{s,\infty,1}$ is large, say $\|f\|_{s,\infty,1} = O(d^{s/2})$ and $d = n^{\alpha}$ with $\alpha\in(0,1)$. As we shall see in Section~\ref{section: lower}, when
    (\ref{variance: size}) holds, $\sigma^2_{f,\bepsilon}(\btheta)$ is the optimal variance, and when $\|f\|_{s,\infty,1} = O(1)$, (\ref{bound: MSE}) implies the minimax optimal rate of MSE, namely $O\big(n^{-1} \vee (d/n)^{s}\big)$ under standard Gaussian shift
    model.
\end{remark}

\section{Lower Bounds}
\label{section: lower}

In this section, we establish several lower bounds under model (\ref{model}).
In Theorem~\ref{theorem: minimax_lower_1} and Theorem~\ref{theorem: minimax_lower_2} below, we show two minimax lower bounds under standard Gaussian shift model, namely $\bSigma = \bI_d$.
In this case, the intrinsic dimension $\br(\bSigma) = d$. These two theorems together show that for a given $f\in \calF^s(\RR^d)$ with $\|f\|_{s,\infty,1}\leq 1$, the proposed estimator $g(\bar{\bx})$
is minimax optimal. Our methods to prove these results are based on some new techniques which are quite different from the previous methods introduced by~\cite{nemirovskii1991necessary,nemirovski2000topics}.
Note that a recent result in~\cite{koltchinskii2018efficient} attained a similar type of minimax lower bounds for a special H\"older type function class with different methods.
Next, we prove another lower bound in Theorem~\ref{theorem: efficiency} which essentially shows that the asymptotic variance in Theorem~\ref{theorem: normal} is optimal which implies
asymptotic efficiency of the proposed estimator. The method we use to prove this lower bound is based on an application of van Trees inequality introduced by~\cite{koltchinskii2018efficient}.

\begin{theorem}
	\label{theorem: minimax_lower_2}
	Assume that $f \in \calF^s(\RR^d)$ such that $\|f\|_{s,\infty,1}\leq B$ (B can be as large as $d^{s}$) with $s > 1$, and under model (\ref{model}), $\bar{\bx} \sim \mathcal{N}(\btheta; n^{-1}\bI_d)$.
	Then with some absolute constant $c_1$, the following lower bound holds
	\begin{equation}
	\label{bound: lower_2}
	\inf_T \sup\limits_{\|f\|_{s,\infty,1}\leq B}\sup\limits_{\|\btheta\|\leq 1} \EE_{\btheta} (T(\bar{\bx}) - f(\btheta))^2 \geq c_1 \big(d/n \big)^s
	\end{equation}
\end{theorem}

Now we switch to prove another minimax lower bound. The proof is based on an application of the well-known Assouad's Lemma (\cite{Tsybakov:2008:INE:1522486} Lemma 2.12).

\begin{theorem}
	\label{theorem: minimax_lower_1}
	Assume that $f \in \calF^s(\RR^d)$ such that $\|f\|_{s,\infty,1}\leq B$ (B can be as large as $d^{s}$) with $s > 1$, and under model (\ref{model}), $\bar{\bx} \sim \mathcal{N}(\btheta; n^{-1}\bI_d)$.  Then for some numerical constant $c'_1>0$
	\begin{equation}
	\label{bound: lower_1}
	\inf_T \sup\limits_{\|f\|_{s,\infty,1}\leq B}\sup\limits_{\|\btheta\|\leq 1} \EE_{\btheta} (T(\bar{\bx}) - f(\btheta))^2 \geq c'_1 n^{-1}.
	\end{equation}
\end{theorem}

\begin{remark}
	Combining the bounds in (\ref{bound: lower_1}) and (\ref{bound: lower_2}), we obtain when $d = n^{\alpha}$ with $\alpha \in (0,1)$
	\begin{equation}
    \inf_T \sup\limits_{\|f\|_{s,\infty,1}\leq B}\sup\limits_{\|\btheta\|\leq 1} \EE_{\btheta} (T(\bx) - f(\btheta))^2
	\gtrsim \big( n^{-1} \vee n^{-s(1-\alpha)} \big) .
	\end{equation}
    Note that when $\|f\|_{s,\infty,1}=O(1)$, this matches the upper bound in (\ref{bound: MSE}) which shows $g(\bar{\bx})$ is minimax optimal for standard Gaussian shift model, which was proved previously
    by~\cite{koltchinskii2018efficient} for a H\"{o}lder-type function class.
    However,
    when $\|f\|_{s,\infty,1}$ depends on $d$, there is a gap between this lower bound and the upper bound (\ref{bound: MSE}). Because the term $\|f\|^2_{s,\infty,1}\cdot n^{-1} \gg n^{-1}$ which means the variance will dominate bias. As we shall see in Theorem~\ref{theorem: efficiency},
    $\|f\|^2_{s,\infty,1}\cdot n^{-1}$ is actually the optimal asymptotic variance.
	\end{remark}

In the next theorem, we prove a lower bound which together with (\ref{bound: efficiency}) implies the asymptotic efficiency of the proposed estimator $g(\bar{\bx})$.
\begin{theorem}
	\label{theorem: efficiency}
	Under model (\ref{model}), suppose that $f\in \calF^s(\RR^d)$ with $s>1$ and for some absolute constant $\tau>0$
	\begin{equation}
		\label{variance: lower_efficiency}
		\frac{n^{-1/2}\big\| f \big\|_{s,\infty,1}\|\bSigma\|^{1/2}_{op}}{\sigma_{f,\bepsilon}(\btheta_0)} \leq \tau .
	\end{equation}
	Then there exists an absolute constant $C_1>0$ such that for all $c>0$, the following bound holds
	\begin{equation}
		\label{bound: lecam_lower}
		\inf_T \sup\limits_{\btheta \in \calB(\btheta_0;cn^{-1/2})} \frac{\EE_{\btheta} (T(\bx) - f(\btheta))^2}{\sigma^2_{f,\bepsilon}(\btheta)}  \geq 1 - C_1 \tau^2 \big(  cn^{-(2\wedge s)/2+1/2} + c^{-2} \big) ,
	\end{equation}
	where $\calB(\btheta_0;cn^{-1/2}):= \{\btheta: \|\btheta -\btheta_0\|\leq cn^{-1/2} \}$. Especially,
	\begin{equation}
		\lim\limits_{c\rightarrow \infty} \lim_{n\rightarrow \infty} \inf_T \sup\limits_{\btheta \in \calB(\btheta_0;cn^{-1/2})}  \frac{\EE_{\btheta} (T(\bx) - f(\btheta))^2}{\sigma^2_{f,\bepsilon}(\btheta)} \geq 1 .
	\end{equation}
\end{theorem}

\begin{remark}
	Together with Theorem~\ref{theorem: normal}, Theorem~\ref{theorem: efficiency} shows that when $\sigma_{f,\bepsilon}(\btheta) \asymp \|\bSigma\|_{op}\|f\|_{s,\infty,1}/\sqrt{n}$, the asymptotic variance $\sigma^2_{f,\bepsilon}(\btheta)$ is optimal thus implies asymptotic efficiency of the proposed estimator $g(\bar{\bx})$. Furthermore, the variance $\|f\|^2_{s,\infty,1}/n$ matches the first term in (\ref{bound: MSE}) which shows that as long as $R^{-2s} = O(n^{-1})$, bound (\ref{bound: MSE}) is optimal given $s>1/(1-\alpha)$.
\end{remark}

\section{Estimation with Unknown Covariance Matrix}
\label{section: adaptive}

In this section, we discuss an adaptive estimation strategy to deal with the case when the covariance matrix $\bSigma$ is unknown. As we can see, there are two variables need to be decided without knowing $\bSigma$:
one is the true covariance matrix $\bSigma$ to be plugged into (\ref{estimator: f}) and the other is the truncation radius $R$ which can depend on $\br(\bSigma)$ when $\|f\|_{s,\infty,1} = O(1)$. Note that when $\|f\|_{s,\infty,1}$ depends on
the dimension, say $\|f\|_{s,\infty,1} = O(d^{s/2})$, one can simply choose $R = \sqrt{n}$ which does not depend on $d$.
Clearly, both parameters can be achieved with a fairly good estimator of $\bSigma$.
In the following, we provide a data driven method to estimate $\bSigma$ under model (\ref{model}), where multiple noisy observations are available.

Recall that $\bx_j = \btheta+ \bepsilon_j$, $j=1,...,n$ and $\bepsilon_j\sim \calN(\mathbf{0},\bSigma)$. To estimate $\bSigma$, we consider
\begin{equation}
	\bbeta_j = \sqrt{\frac{j-1}{j}}(\bx_j - \bar{\bx}_{j-1}),\hspace{0.1in}~\bar{\bx}_{j-1} = \frac{1}{j-1} \sum_{i=1}^{j-1} \bx_i,\hspace{0.1in}~j=2,...,n.
\end{equation}
 In this case, it is easy to check that $\widetilde{\bbeta}_{j} = \bbeta_{j+1}$, $j=1,...,n-1$ are i.i.d. copies of a centered Gaussian random vector $\bbeta\sim \calN(\mathbf{0},\bSigma)$. We denote by $\hat{\bSigma}:=(n-1)^{-1}\sum_{j=1}^{n-1}\widetilde{\bbeta}_j\widetilde{\bbeta}_j^T$ the sample
 covariance matrix of $\bbeta$, and we use $\hat{\bSigma} $ as an estimator of $\bSigma$. We denote by $\hat{\Omega} \subset \{\bzeta: \|\bzeta\|\leq \hat{R} \}$, then we define the following adaptive estimator
 \begin{equation}
 \label{estimator: adaptive}
 	\hat{g}(\bar{\bx}):=   \frac{1}{(2\pi)^{d/2}}   \int_{\hat{\Omega}}  \mathcal{F} f(\bzeta)e^{\langle \hat{\bSigma} \bzeta, \bzeta \rangle/2n} e^{i\bzeta \cdot \bar{\bx}} d\bzeta .
 \end{equation}
where $\hat{R}:= \sqrt{n/\br(\hat{\bSigma})}$ when $\big\| f \big\|_{s,\infty,1}=O(1)$ and $\hat{R} := \sqrt{n}$ when $\big\| f \big\|_{s,\infty,1} = O(d^{s/2})$.

In the following theorem, we show that the difference between $\hat{g}(\bar{\bx})$ and $g(\bar{\bx})$ is asymptotically negligible with high probability.

\begin{theorem}
\label{theorem: adaptive}
	Under model (\ref{model}), let $f\in \calF^s(\RR^d)$ with $s>1$ and $\hat{g}(\bar{\bx})$ and $g(\bar{\bx})$ be defined as in (\ref{estimator: adaptive}) and (\ref{estimator: f}) respectively. Then for any $t>1$ with probability at least $1-e^{-t}$ and some numerical constant  $\widetilde{C}$
	\begin{equation}
	\big| \hat{g}(\bar{\bx}) - g(\bar{\bx})\big| \leq \widetilde{C} \Big\{ \frac{\big\| f \big\|_{s,\infty,1}}{n^{(2\wedge s)/2}} \cdot \Big( \sqrt{\frac{\br(\bSigma)}{n}} \bigvee \sqrt{\frac{t}{n}} \Big)  + \big\| f \big\|_{s,\infty,1} \cdot R^{-s} \Big\}.
	\end{equation}
\end{theorem}

\begin{remark}
Theorem~\ref{theorem: adaptive} shows that $\big| \hat{g}(\bar{\bx}) - g(\bar{\bx})\big|$ is of smaller order than bound (\ref{bound: MSE}) on MSE achieved by $g(\bar{\bx})$ with high probability. Especially, when $\br(\bSigma) = O(n^{\alpha})$ with $\alpha\in(0,1)$ and $s>1/(1-\alpha)$, all the results we show for $g(\bar{\bx})$ still holds for $\hat{g}(\bar{\bx})$. Indeed, our simulation results in Section~\ref{section: simulation} show that both estimators achieve similar performance which is much
better than the plug-in estimator.
\end{remark}

\section{Numerical Simulation}
\label{section: simulation}

In this section, we conduct simulation study to test the performance of our estimator under standard Gaussian shift model where $\bepsilon \sim \mathcal{N}(\mathbf{0}, \bI_d)$.
We denote the estimator defined in (\ref{estimator: f}) by TF-estimator, and the estimator defined in (\ref{estimator: adaptive}) by ADP-estimator (adaptive estimator).
We test our estimators on the following type of multivariate functions:
$f(\btheta):=\beta*\prod_{j=1}^d h(\theta_j) $, where the normalizing factor $\beta$ is used to make $f(\btheta)$ be a constant for different values of $d$.

We choose $h$ with two different smoothness properties and compare the bias, variance, and MSE of TF-estimator, adaptive estimator with the plug-in estimator when
$\alpha$ ranges from $0.4$ to $0.85$.
The unknown parameters $\btheta\in \RR^d$ are randomly generated that yield a uniform distribution over $[0.4, 0.6]^d$ for different dimension parameter $d$.
We set the sample size $n=10000$.

We use the MATLAB built-in function $\mathbf{fft}()$ and $\mathbf{ifft}()$ to compute the Fourier Transformation and the Inverse Fourier Transformation appeared in the analysis. When we implement TF-estimator,
the truncation in the frequency domain was done uniformly for each coordinate for simplicity. Thus, the support of $\mathcal{F}f^N$ after truncation is contained in a hyper-cube instead of a d-ball. Note that the built-in function $\mathbf{fft}()$ and $\mathbf{ifft}()$ are implementations of discrete Fourier transform (DFT). Those discrepancies between implementations and our theoretical results make the cutoff range drift a little bit from our suggested choice of $R$. The cutoff range for each coordinate we use is $[64, 100]$. We observed that typically larger $\alpha$ and higher dimension $d$ needs smaller cutoff to achieve better performance, which is consistent with our prediction specified in $R$.

\subsection{Bias reduction}

\begin{figure}[!htb]
	\centering
	\begin{subfigure}[b]{0.48\textwidth}
		\includegraphics[width=\textwidth]{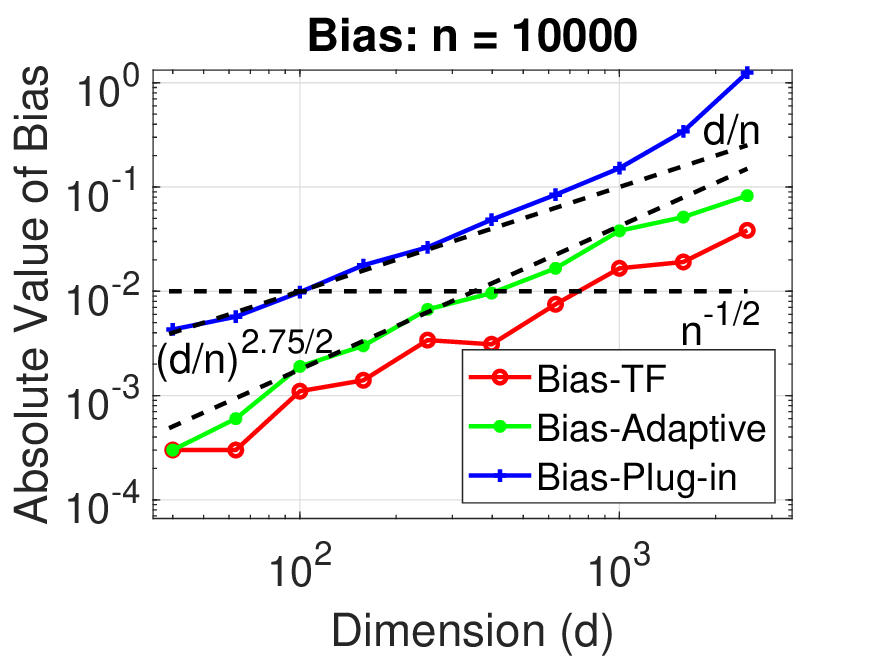}
		\caption{Bias Comparison for $h(x) = (2x)^{2.75}$}
		\label{fig: bias-s2}
	\end{subfigure}
	~ 
	\begin{subfigure}[b]{0.48\textwidth}
		\includegraphics[width=\textwidth]{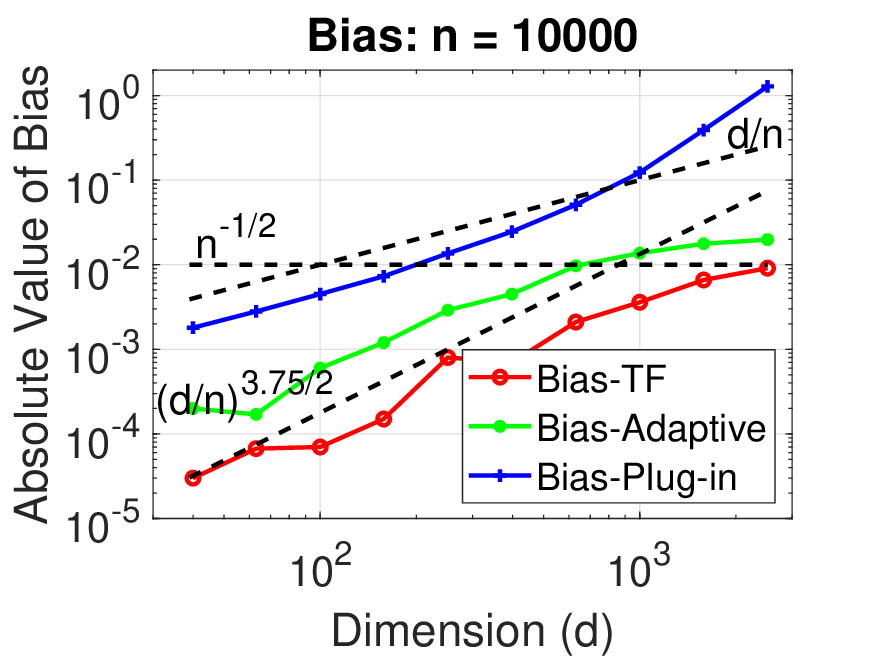}
		\caption{Bias Comparison for $h(x) = (2x)^{3.75}$}
		\label{fig: bias-s3}
	\end{subfigure}
	\caption{Bias Comparison}
\end{figure}

We choose two different base functions $h(x)$ with $x\in[0,1]$ to test the performance. One is $h_1(x) = (2x)^{2.75}$ and the other is $h_2(x) = (2x)^{3.75}$. The scalar factor is used to avoid overflow of the function values when the dimension $d$ is large. Especially, the underlying function values for both cases are normalized to a constant for this case in order to force the function magnitude to be bounded.
One should notice that $h_1(x) = (2x)^{2.75}$ belongs to the H\"{o}lder class with smoothness at most $s=2.75$ while $h_2(x) = (2x)^{3.75}$ belongs to the one with $s=3.75$.
In other words, $h_2$ has a higher smoothness condition than $h_1$.
The data for bias comparison for both cases are listed in Table~\ref{tb:adp_s2_bias} and Table~\ref{tb:adp_s3_bias} respectively in the appendix, and they are plotted in Figure~\ref{fig: bias-s2} and Figure~\ref{fig: bias-s3}.
The metric we use is $|\EE_{\btheta}g(\bar{\bx}) -f(\btheta)|$, where $\EE_{\btheta}g(\bar{\bx})$ is simulated by sample mean of 20000 independent trials.

As we can see, for both cases, the bias reduction phenomena are very obvious. The dash lines which plot $(d/n)^{2.75/2}$ and $(d/n)^{3.75/2}$ are supposed to be of the same order as the upper bounds on the bias of our estimators as proved in Theorem~\ref{theorem: bias}. The simulation results align with the bounds quite well. Sometimes, the actual bias can pass the line, we think these discrepancies may be due to the constant factors appeared in the bounds and the implementation issue we mentioned above.

Another phenomenon we are interested in is the threshold on smoothness. The magnitude of $f$ are intentionally adjusted for both cases such that the bias of the plug-in estimator will exceed the dash line $n^{-1/2}$ around $\alpha =0.5$. When we continue to increase $\alpha$ beyond $0.5$, the bias of TF-estimators and adaptive estimators still stays below the dash line $n^{-1/2}$ for sometime while the bias of the plug-in estimators exceeds way above this level. However, the bias of both adaptive estimators start to pass the line as $\alpha$ passes $0.65$ for the case $h(x) = (2x)^{2.75}$ and between $0.70$ and $0.75$ for the case $h(x) = (2x)^{3.75}$. The threshold on smoothness appeared in this article and~\cite{koltchinskii2018efficient} suggests that the bias are expected to be greater than $O(n^{-1/2})$ when $s>1/(1-\alpha)$. For both cases, the suggested passing point should be around $\alpha = 0.64$ for the case $h(x) = (2x)^{2.75}$ and the point should be around $\alpha = 0.73$ for the case $h(x) = (2x)^{3.75}$.

\subsection{MSE comparison and minimax lower bound}

\begin{figure}[!htb]
	\centering
	\begin{subfigure}[b]{0.48\textwidth}
		\includegraphics[width=\textwidth]{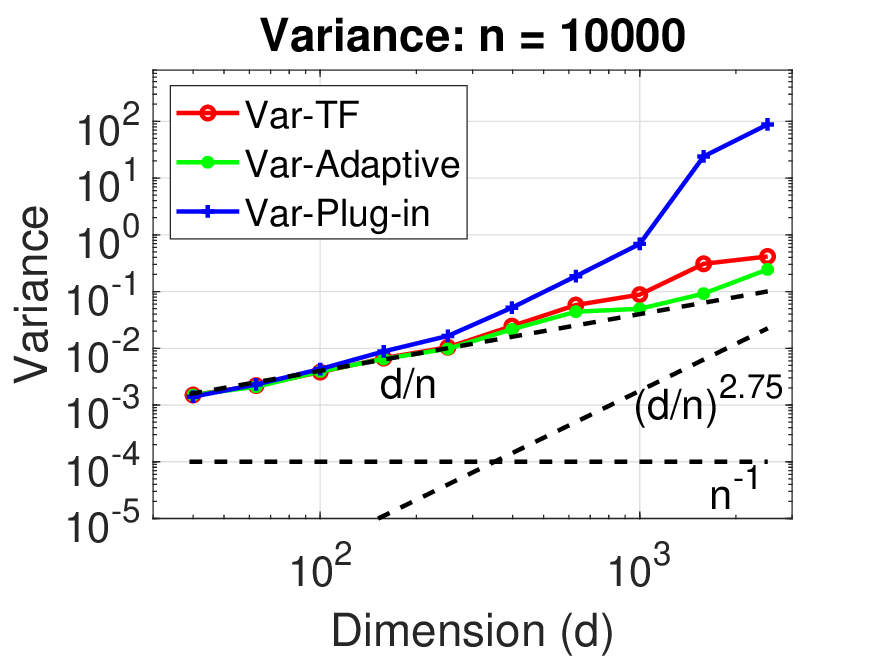}
		\caption{Variance Comparison for $h(x) = (2x)^{2.75}$}
		\label{fig: var-s2}
	\end{subfigure}
	~ 
	\begin{subfigure}[b]{0.48\textwidth}
		\includegraphics[width=\textwidth]{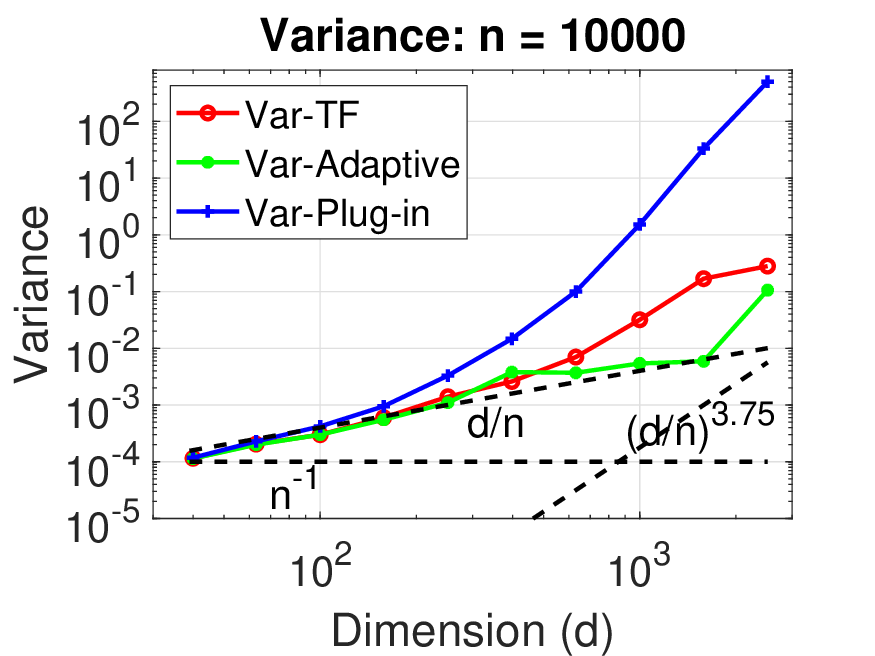}
		\caption{Variance Comparison for $h(x)=(2x)^{3.75}$}
		\label{fig: var-s3}
	\end{subfigure}
	\caption{Variance Comparison}
\end{figure}

\begin{figure}[!htb]
	\centering
	\begin{subfigure}[b]{0.48\textwidth}
		\includegraphics[width=\textwidth]{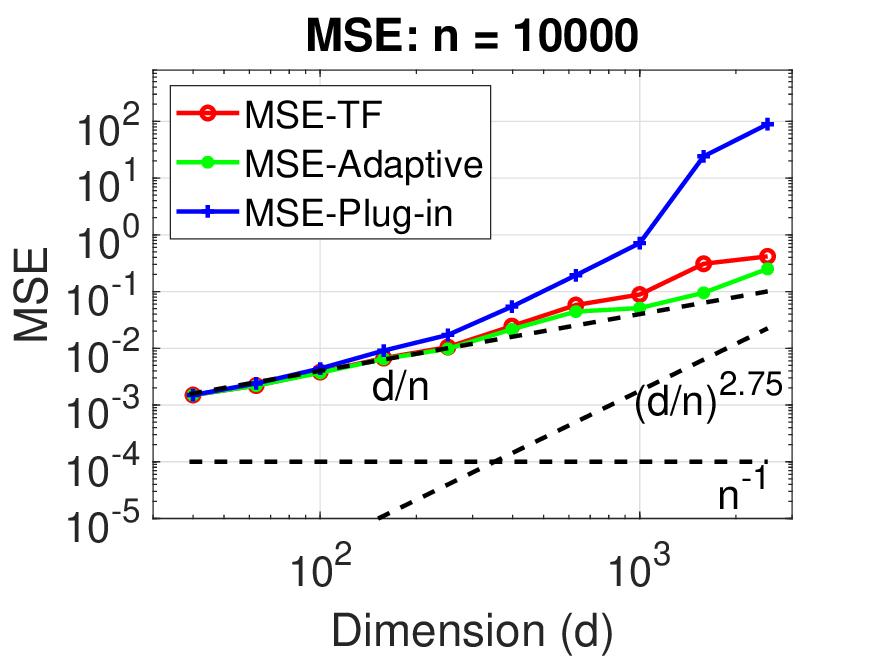}
		\caption{MSE Comparison for $h(x) = (2x)^{2.75}$}
		\label{fig: mse-s2}
	\end{subfigure}
	~ 
	\begin{subfigure}[b]{0.48\textwidth}
		\includegraphics[width=\textwidth]{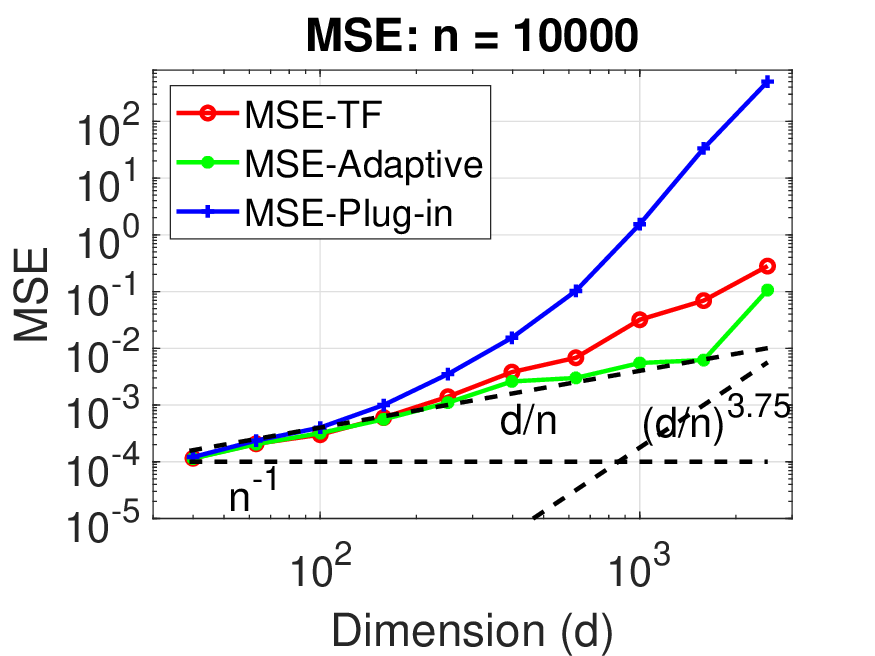}
		\caption{MSE Comparison for $h(x)=(2x)^{3.75}$}
		\label{fig: mse-s3}
	\end{subfigure}
	\caption{MSE Comparison}
\end{figure}

We compare the variance and MSE for both cases in this section.
The variance data are listed in Table~\ref{tb:adp_s2_var} and Table~\ref{tb:adp_s3_var} in appendix which are plotted in Figure~\ref{fig: var-s2} and Figure~\ref{fig: var-s3}. As we can see, the variance are almost the same for TF-estimator and adaptive estimator when $\alpha \leq 0.65$. However, when we keep increasing $d$ we can see a clear variance reduction compared with Plug-in estimator when $\alpha$ exceeds $0.5$.
Given that TF-estimator and adaptive estimator achieve better bias reduction, these show their superiority over Plug-in estimators.
Especially, the dash line $d/n$ is approximately of the same order as $\|\nabla f(\btheta)\|^2/n$ which is the lower bound according to well-known Cram\'{e}r-Rao bound. As we can see, when $\alpha$ is small, the variance of the proposed estimator aligns well with
this lower bound. However, when $\alpha$ is large, we can observe a clear divergence between these two. This validates Theorem~\ref{theorem: efficiency} that efficiency can not be achieved when the dimension is too large.

The metric for MSE we use is $\EE_{\btheta}(g(\bar{\bx}) -f(\btheta))^2$ which is simulated by averaging the square error of 20000 independent trials.
The MSE data are listed in Table~\ref{tb:adp_s2_mse} and Table~\ref{tb:adp_s3_mse} in appendix and are plotted in Figure~\ref{fig: mse-s2} and Figure~\ref{fig: mse-s3}.
As we can see, the improvements on reduction of MSE become more obvious as the dimension grow larger for both cases. We also plot $ n^{-1}$ and $(d/n)^s$ as dash lines, which are supposed to be of the same order as the minimax lower bounds on MSE when
$\|f\|_{s,\infty,1}=O(1)$ as shown in (\ref{bound: lower_2}) and (\ref{bound: lower_1}) and $d/n$ is supposed to be minimax lower bound on MSE when $\|f\|_{s,\infty,1}\gg 1$. In Figure~\ref{fig: mse-s2} and Figure~\ref{fig: mse-s3}, we can see that both MSE curves' trend align well with the bounds. Meanwhile, we can see that when $\alpha$ exceeds $0.5$, the reduction in MSE becomes more obvious for both cases. Especially, the reduction with $h(x)=(2x)^{3.75}$ with more smoothness is more obvious than with $h(x)=(2x)^{2.75}$.

\subsection{Applications in Building Confidence Intervals}

\begin{table}[!htb]
	\centering
	\begin{center}
		\begin{tabular}{|c|c|c|c|}
			\hline\hline
			$\alpha$-value  & True parameter & Plug-in &  Adaptive \\ \hline
			0.40             &        0.8578    &  [0.8880, 0.8970]       &   \bf{[0.8517, 0.8606]}  \\ \hline
			0.45             &        0.7218    &  [0.7647, 0.7746]       &   \bf{[0.7226, 0.7324]}        \\ \hline
			0.50             &        0.7795    &  [0.8526, 0.8668]       &   \bf{[0.7768, 0.7904] } \\ \hline
			0.55             &        0.8917    &  [1.0187, 1.0407]       &   \bf{[0.8901, 0.9105] } \\ \hline
			0.60             &        0.3415    &  [0.4222, 0.4347]       &   \bf{[0.3398, 0.3505] }  \\ \hline
			0.65             &        0.2946    &  [0.4300, 0.4493]       &   \bf{[0.2942, 0.3086]}  \\ \hline
			0.70             &        1.2035    &  [2.1257, 2.2776]       &   [1.2844, 1.3884]  \\ \hline
			0.75             &        0.4020    &  [0.9884, 1.1010]       &   [0.4198, 0.4744]    \\ \hline
			0.80             &        2.1655    &  [9.1964, 12.569]       &   [2.6274, 3.8089]   \\ \hline
			0.85             &        0.2988    &  [2.2443, 6.4455]       &   [0.2367, 0.9197]   \\ \hline
			\hline
		\end{tabular}
		\caption{95\% Confidence interval to estimate $f(\btheta)$ with $\beta = 2.75$}
		\label{tb:ci-275}
	\end{center}
\end{table}

As we have mentioned in Section~\ref{section: normal}, an important application of normal approximation in practice is to use the estimates to build confidence intervals of the true parameter.
To compare the quality of confidence intervals built by the estimator, we collect the estimates of a fixed underlying parameter $f(\btheta)$ from 20000 independent runs. We use the MATLAB built-in function $\mathbf{histfit()}$ to draw the histograms and use $\mathbf{fitdist()}$ to fit the histograms into a normal distribution.
We show the 95\% confidence intervals for estimation of $f(\btheta)$ from the fitted normal models in Table~\ref{tb:ci-275} with $\beta = 2.75$.
As we can see, confidence intervals based on the adaptive estimator (\ref{estimator: adaptive}) are much better than the those based on the plug-in approach at all levels of dimension.
In fact, for small $\alpha$ values, say $\alpha \leq 0.65$, the true parameter always falls into the confidence interval we built while falls outside the ones built by the plug-in estimator.
However, when $\alpha > 0.65$, the confidence intervals built based on both approaches become quite unreliable. This phenomenon aligns well with our theory in Theorem~\ref{theorem: normal} which suggests that when dimension is large enough such that the smoothness index $s$ is below the threshold $1/(1-\alpha)$, then the bias can be large and the asymptotic normality result may fail to hold.

\section{Conclusion and Discussion}

In this article, we studied the estimation of $f(\btheta)$ with an unknown parameter $\btheta\in \RR^d$ and a given $f$ under Gaussian shift model when the intrinsic dimension of the parameter can grow with the sample size.
We proposed a new estimator which can be shown both analytically and experimentally to achieve much better bias reduction and variance reduction than $f(\bar{\bx})$. Asymptotic normality and efficiency were proved once the
smoothness parameter stays above some threshold related to dimensionality. Such threshold was initially discovered by~\cite{koltchinskii2018efficient} and turns out to be sharp for a H\"{o}lder type smooth class with
$\|f\|_{C^s}\leq 1$. However, as we discussed in Section \ref{section: prelim}, the norm $\|f\|_{s,\infty,1}$ introduced in this paper can easily depend on $d$ which can be much larger than 1, we are particularly interested
in whether with large $\|f\|_{s,\infty,1}$ such threshold is still sharp or not. To answer this question, one needs a proper minimax lower bound characterized by $\|f\|_{s,\infty,1}$ and $s$. Another aspect of our particular interest is that whether current results under Gaussian shift model can be generalized to general distributions such as heavy-tailed ones. The estimator proposed in this article may not be directly applied in such cases since it heavily relies on the gaussian assumption. In order to solve this problem, one may need to develop some new estimator. Generalization of the results in~\cite{koltchinskii2018efficient} also can be hard as some new concentration inequality needs to be developed as indicated in~\cite{koltchinskii2019estimation}. We leave those questions as future directions.

\section*{Acknowledgement}

We sincerely thank the insightful discussion with Professor Cun-Hui Zhang  during his visit at Baidu Research and thank Professor Vladimir Koltchinskii for several insightful comments on this work.

\newpage

\section{Proofs}

\subsection{Proof of Theorem~\ref{theorem: bias}}
\begin{proof}
	\label{proof: theorem_bias}
	We decompose $f$ as $f:= f^N + \tilde{f}^N$, where
	\begin{equation*}
		\tilde{f}^N(\btheta) := (2\pi)^{-d/2} \int_{\RR^d\backslash\Omega} \calF f(\bzeta)\cdot e^{i\bzeta^T \btheta}  d\bzeta
	\end{equation*}
	Recall that from (\ref{conv2}), we have
	\begin{equation}
	\begin{aligned}
	\EE_{\theta} g(\bar{\bx}) = g*p^o(\btheta) &=  \frac{1}{(2\pi)^{d/2}}  \int_{\mathbb{R}^d} \Big(\int_{\mathbb{R}^d}  \mathcal{F} f^N(\bzeta)e^{\langle \bSigma \bzeta, \bzeta \rangle/2} e^{i\bzeta \cdot (\btheta-\bx)} d\bzeta \Big) p^o(\bx) d\bx \\
	& = \int_{\mathbb{R}^d}  \mathcal{F} f^N(\bzeta)e^{\langle \bSigma \bzeta, \bzeta \rangle/2} e^{i\bzeta \cdot \btheta} \mathcal{F} p^o(\bzeta) d\bzeta \\
	& = \frac{1}{(2\pi)^{d/2}} \int_{\mathbb{R}^d}  \mathcal{F} f^N(\bzeta) e^{i\bzeta \cdot \btheta}  d\bzeta \\
	& =  \mathcal{F}^{-1} \mathcal{F} f^N = f^N(\btheta),
	\end{aligned}
	\end{equation}
	where for the third line, we used the fact that $\mathcal{F} p^o(\bzeta) = (2\pi)^{-d/2} e^{-\langle \bSigma \bzeta, \bzeta \rangle/2}.$
	Therefore, $\big| \EE_{\theta} g(\bar{\bx}) - f(\btheta)\big| = |f^N(\btheta) - f(\btheta)| = |\tilde{f}^N(\btheta)| $. Meanwhile, for $\tilde{f}^N(\btheta)$ we denote by
	$\Omega^c:= \RR^d \backslash \Omega$ the complementary of domain $\Omega$ of $\calF f^N$
	\begin{equation}
	\label{norm: remainder}
	\begin{aligned}
	|\tilde{f}^N(\btheta)| & \leq \Big| \frac{1}{(2\pi)^{d/2}} \int_{\Omega^c}  \mathcal{F} f(\bzeta) e^{i\bzeta \cdot \btheta}  d\bzeta  \Big| \\
	& \leq   \frac{1}{(2\pi)^{d/2}} \int_{\Omega^c}  \Big| \mathcal{F} f(\bzeta) \Big| \cdot \|\bzeta \|^s \cdot R^{-s} d\bzeta  \\
	& \leq   \frac{1}{(2\pi)^{d/2}} \int_{\Omega^c}  \Big| \mathcal{F} f(\bzeta) \Big|  d\bzeta \leq \big\| f \big\|_{s,\infty,1} \cdot R^{-s},
	\end{aligned}
	\end{equation}
	where $R$ denotes the radius of $\Omega$.
	Especially, by taking $R = \sqrt{n/\br(\bSigma)}$
	\begin{equation}
	\big|\tilde{f}^N(\btheta) \big|  \leq \big\| f \big\|_{s,\infty,1} \cdot \Big( \frac{\br(\bSigma)}{n} \Big) ^{s/2} .
	\end{equation}
\end{proof}

\subsection{Proof of Theorem~\ref{theorem: normal}}
\begin{proof}[Proof of Theorem~\ref{theorem: normal}]
	We denote by $g_{\bzeta,n}(\bepsilon) := 1+ i\bzeta^T \bepsilon/n$, and consider the following decomposition of $g(\bar{\bx}) - f(\btheta)$:
	\begin{equation}
		\begin{aligned}
			g(\bar{\bx}) - f(\btheta) = \calA_1 + \calA_2 + \calA_3,
		\end{aligned}
	\end{equation}
	where we denote by
	\begin{align}
		\calA_1 &:= g(\bar{\bx}) - \frac{1}{(2\pi)^{d/2}}   \int_{\Omega}  \mathcal{F} f^N(\bzeta) \prod_{j=1}^n g_{\bzeta,n} (\bepsilon_j) e^{i\bzeta^T\btheta}d\bzeta \\
		\calA_2 &:=  \frac{1}{(2\pi)^{d/2}}   \int_{\Omega}  \mathcal{F} f^N(\bzeta) \prod_{j=1}^n g_{\bzeta,n} (\bepsilon_j) e^{i\bzeta^T\btheta}d\bzeta  - f^N(\btheta) \\
		\calA_3 &:=  \tilde{f}^N(\btheta).
	\end{align}
	We deal with $\calA_1$, $\calA_2$, and $\calA_3$ respectively.
	
	Firstly, we show that $\calA_1/\sigma_{f,\bepsilon}(\btheta)$ converges to 0 in probability. Recall that by the definition of $g(\bar{\bx})$ we have
	\begin{equation}
		\begin{aligned}
			\calA_1 :=  \frac{1}{(2\pi)^{d/2}} \int_{\Omega}  \mathcal{F} f^N(\bzeta)e^{i\bzeta^T\btheta}
			\Big(e^{\langle\bSigma\bzeta,\bzeta\rangle/2n}\prod_{j=1}^n e^{i\bzeta^T\bepsilon_j/n} - \prod_{j=1}^n g_{\bzeta,n} (\bepsilon_j)\Big) d\bzeta
		\end{aligned}
	\end{equation}
	Therefore,
	\begin{equation}
		\begin{aligned}
			\EE \big| \calA_1 \big|  &= \EE \Big| \frac{1}{(2\pi)^{d/2}} \int_{\Omega}  \mathcal{F} f^N(\bzeta) e^{i\bzeta^T\btheta}
			\EE \Big[ e^{\langle\bSigma\bzeta,\bzeta\rangle/2n}\prod_{j=1}^n e^{i\bzeta^T\bepsilon_j/n} - \prod_{j=1}^n g_{\bzeta,n} (\bepsilon_j)\Big] d\bzeta \Big| \\
			& \leq\frac{1}{(2\pi)^{d/2}} \int_{\Omega}  \Big| \mathcal{F} f^N(\bzeta)  \Big|
			\cdot \EE \Big| e^{\langle\bSigma\bzeta,\bzeta\rangle/2n}\prod_{j=1}^n e^{i\bzeta^T\bepsilon_j/n} - \prod_{j=1}^n g_{\bzeta,n} (\bepsilon_j)\Big| d\bzeta
		\end{aligned}
	\end{equation}
	The next step is to bound
	$\EE \big| e^{\langle\bSigma\bzeta,\bzeta\rangle/2n}\prod_{j=1}^n e^{i\bzeta^T\bepsilon_j/n} - \prod_{j=1}^n g_{\bzeta,n} (\bepsilon_j)\big|$.
	By a standard swapping argument,
	\begin{equation}
		\label{A1}
		\begin{aligned}
			& \EE \big| e^{\langle\bSigma\bzeta,\bzeta\rangle/2n}\prod_{j=1}^n e^{i\bzeta^T\bepsilon_j/n} - \prod_{j=1}^n g_{\bzeta,n} (\bepsilon_j)\big|\\
			& \leq \sum_{k=1}^n \EE\Big|\prod_{j=1}^{k-1}e^{\langle\bSigma\bzeta,\bzeta\rangle/2n^2} e^{i\bzeta^T\bepsilon_j/n}
			\Big(e^{\langle\bSigma\bzeta,\bzeta\rangle/2n^2} e^{i\bzeta^T\bepsilon_k/n} -g_{\bzeta,n} (\bepsilon_k) \Big)\prod_{i=k+1}^{n} g_{\bzeta,n} (\bepsilon_i)   \Big|\\
			& \leq  \sum_{k=1}^n e^{(k-1)_+\langle\bSigma\bzeta,\bzeta\rangle/2n^2} \cdot \EE \Big|e^{\langle\bSigma\bzeta,\bzeta\rangle/2n^2} e^{i\bzeta^T\bepsilon_k/n} -g_{\bzeta,n} (\bepsilon_k) \Big|
			\cdot \Big( \EE\big| g_{\bzeta,n} (\bepsilon_k)\big| \Big)^{(n-k)_+}.
		\end{aligned}
	\end{equation}
	Recall that for $e^{\langle\bSigma\bzeta,\bzeta\rangle/2n^2}$ we have
	\begin{equation}
		\label{comp1}
		e^{\langle\bSigma\bzeta,\bzeta\rangle/2n^2} \leq e^{\sigma^2\| \bzeta\|^2 /2n^2} \leq e^{\sigma^2R^2 /2n}.
	\end{equation}
	For $\EE \big|e^{\langle\bSigma\bzeta,\bzeta\rangle/2n^2} e^{i\bzeta^T\bepsilon_k/n} -g_{\bzeta,n} (\bepsilon_k) \big|$,
	\begin{equation}
		\label{comp2}
		\begin{aligned}
			& \EE \big|e^{\langle\bSigma\bzeta,\bzeta\rangle/2n^2} e^{i\bzeta^T\bepsilon_k/n} -g_{\bzeta,n} (\bepsilon_k) \big|  \\
			& \leq \EE \big|e^{\langle\bSigma\bzeta,\bzeta\rangle/2n^2}  - 1 \big| + \EE \big| e^{i\bzeta^T\bepsilon_k/n}  -  g_{\bzeta,n} (\bepsilon_k) \big| \\
			& \leq e\sigma^2\| \bzeta\|^2 /2n^2 +  \sigma^2\| \bzeta\|^2 /2n^2 .
		\end{aligned}
	\end{equation}
	For $\EE\big| g_{\bzeta,n} (\bepsilon_k)\big|$, we have
	\begin{equation}
		\label{comp3}
		\EE\big| g_{\bzeta,n} (\bepsilon_k)\big| \leq \sqrt{\EE \big| 1 + i\bzeta^T\bepsilon_k/n\big|^2  } \leq  \sqrt{1+\sigma^2\| \bzeta\|^2 /n^2 }  \leq \sqrt{(1+\sigma^2R^2/n^2)} .
	\end{equation}
	Combine (\ref{comp1}), (\ref{comp2}) and (\ref{comp3}), we get
	\begin{equation}
		\begin{aligned}
			&\EE \big| e^{\langle\bSigma\bzeta,\bzeta\rangle/2n}\prod_{j=1}^n e^{i\bzeta^T\bepsilon_j/n} - \prod_{j=1}^n g_{\bzeta,n} (\bepsilon_j)\big|
			& \leq  \sum_{k=1}^n 2e^2\sigma^2 \| \bzeta\|^2  n^{-2}  \leq 2e^2\sigma^2 \| \bzeta\|^2  n^{-1} .
		\end{aligned}
	\end{equation}
	Plug this into (\ref{A1}), by the definition of $\|f\|_{s,\infty,1}$ and $R\leq \sqrt{n}$, we obtain
	\begin{equation}
		\begin{aligned}
			\EE \big| \calA_1 \big| & \leq \frac{1}{(2\pi)^{d/2}} \int_{\Omega}  \Big| \mathcal{F} f^N(\bzeta)  \Big|
			\cdot 2e^2\sigma^2 \| \bzeta\|^2  n^{-1}  d\bzeta \\
			& \leq \frac{1}{(2\pi)^{d/2}} \int_{\Omega}  \Big| \mathcal{F} f^N(\bzeta)  \Big| \cdot 2e^2\sigma^2 \cdot \frac{( 1 \vee \| \bzeta\|^s)}{n^{(2 \wedge s)/2}}  d\bzeta \\
			& \leq 2e^2\sigma^2 \big\|f \big\|_{s,\infty,1} \cdot n^{-(2 \wedge s)/2}.
		\end{aligned}
	\end{equation}
	As a result, we have
	\begin{equation}
		\frac{\EE \big| \calA_1 \big|}{\sigma_{f,\bepsilon}(\btheta)} \leq \frac{2e^2\sigma^2 \big\|f \big\|_{s,\infty,1}  \cdot n^{-(2 \wedge s)/2} }{n^{-1/2} \sqrt{\big\langle \bSigma \nabla f(\btheta) , \nabla f(\btheta)\big\rangle }}
		\lesssim \tau n^{-(2 \wedge s)/2 + 1/2}.
	\end{equation}
	Under the condition of Theorem~\ref{theorem: normal}, we have
	\begin{equation}
		\frac{\EE \big| \calA_1 \big|}{\sigma_{f,\bepsilon}(\btheta)} \rightarrow 0,~~{\rm as}~~n\rightarrow \infty,
	\end{equation}
	which implies that
	\begin{equation}
		\label{A_1}
		\frac{\calA_1}{\sigma_{f,\bepsilon}(\btheta)} \xrightarrow{p} 0 .
	\end{equation}
	
	The next major step is to show that $\calA_2/\sigma_{f,\bepsilon}(\btheta) \Rightarrow \calN(0,1)$. We can rewrite $\calA_2$ as
	\begin{equation}
		\begin{aligned}
			\calA_2 := \frac{1}{(2\pi)^{d/2}}   \int_{\Omega}  \mathcal{F} f^N(\bzeta) \Big( \prod_{j=1}^n g_{\bzeta,n} (\bepsilon_j) - 1\Big) e^{i\bzeta^T\btheta}d\bzeta .
		\end{aligned}
	\end{equation}
	Further,
	\begin{equation}
		\begin{aligned}
			\prod_{j=1}^n g_{\bzeta,n} (\bepsilon_j)  &= \prod_{k=1}^n  \Big( 1 + i\bzeta^T\bepsilon_k/n \Big) \\
			& = \Big(  1 + S_1(\bzeta) + S_2(\bzeta) +\cdots +S_n(\bzeta)\Big) ,
		\end{aligned}
	\end{equation}
	where $S_k(\bzeta) = \sum_{1\leq j_1\neq\cdots \neq j_k\leq n} \prod _{i=1}^k (i\bzeta^T\bepsilon_{j_i}/n )$. Due to $\EE\bepsilon_k = 0$, $k=1,\dots,n$ and
	independence between $\bepsilon_k$'s, we have $\EE S_k(\bzeta) = 0$. On the other hand, variance of each $S_k(\bzeta)$
	\begin{equation}
		\EE \big|  S_k(\bzeta)  \big|^2 = {n \choose k} \Big( \EE \big|\bzeta^T\bepsilon_{1}/n \big|^2\Big)^k \leq {n \choose k} \Big( \sigma^2 \big\|\bzeta\big\|^2 /n^2\Big)^k \leq \frac{\sigma^{2k}  \big\|\bzeta\big\|^{2k} }{k!n^{k}}.
	\end{equation}
	Note that $S_k$ are completely degenerate U-statistics of order $k$.
	As a consequence,
	\begin{equation}
		\calA_2 := \frac{1}{(2\pi)^{d/2}}   \int_{\Omega}  \mathcal{F} f^N(\bzeta) \Big( S_1(\bzeta) + S_2(\bzeta) +\cdots +S_n(\bzeta)\Big) e^{i\bzeta^T\btheta}d\bzeta,
	\end{equation}
	and above analysis shows that for the case $k=1$
	\begin{equation}
		\frac{1}{(2\pi)^{d/2}}  \int_{\Omega}  \mathcal{F} f^N(\bzeta) S_1(\bzeta) e^{i\bzeta^T\btheta} d\bzeta = \frac{1}{(2\pi)^{d/2}}  \int_{\Omega}  \mathcal{F} f^N(\bzeta) \Big( \sum_{j=1}^n i\bzeta^T\bepsilon_k/n \Big)e^{i\bzeta^T\btheta} d\bzeta ,
	\end{equation}
	which is a sum of zero mean i.i.d. random variables, and the variance of each component is
	\begin{equation}
		\label{bound: variance_1}
		\begin{aligned}
			&\frac{1}{(2\pi)^{d}} \int_{\Omega} \int_{\Omega}   \mathcal{F} f^N(\bzeta_1)\mathcal{F} f^N(\bzeta_2) e^{i \bzeta_1^T\btheta} e^{-i \bzeta_2^T\btheta} \cdot \frac{\bzeta_1^T\bSigma \bzeta_2}{n^2}  d\bzeta_1d\bzeta_2 \\
			&=  \frac{1}{(2\pi)^{d}} \int_{\Omega} \int_{\Omega}   \mathcal{F} f^N(\bzeta_1)\mathcal{F} f^N(\bzeta_2) e^{i \bzeta_1^T\btheta} e^{-i \bzeta_2^T\btheta} \cdot n^{-2} \big\langle \bSigma i\bzeta_1, -i\bzeta_2\big\rangle d\bzeta_1d\bzeta_2 \\
			& = n^{-2} \big\langle \bSigma \nabla f^N(\btheta) , \nabla f^N(\btheta)\big\rangle,
		\end{aligned}
	\end{equation}
	where for the last equality, we used the following identity
	\begin{equation}
		\begin{aligned}
			\frac{1}{(2\pi)^{d/2}} \int_{\Omega} \mathcal{F} f^N(\bzeta) i\bzeta e^{i \bzeta^T\btheta}  d\bzeta & = \nabla_{\btheta} \Big[\frac{1}{(2\pi)^{d/2}} \int_{\Omega} \mathcal{F} f^N(\bzeta)  e^{i \bzeta^T\btheta}  d\bzeta \Big] \\
			& = \nabla_{\btheta} \calF^{-1} \calF f^N(\btheta) = \nabla_{\btheta} f^N(\btheta).
		\end{aligned}
	\end{equation}
	By standard Central Limit Theorem,
	\begin{equation}
		\label{s_1}
		\frac{\sqrt{n}\cdot \frac{1}{(2\pi)^{d/2}}  \int_{\Omega}  \mathcal{F} f^N(\bzeta) S_1(\bzeta)  d\bzeta}{ \sqrt{\big\langle \bSigma \nabla f^N(\btheta) , \nabla f^N(\btheta)\big\rangle}} \Rightarrow \calN(0,1).
	\end{equation}
	Meanwhile, to replace the variance $\sigma_{f^N,\bepsilon}(\btheta)$ by $\sigma_{f,\bepsilon}(\btheta)$, we have the following lemma.
	\begin{lemma}
		\label{lemma: variance}
		For a given $f \in \calF^s(\RR^d)$ with $d=n^{\alpha}$ and $s>1/(1-\alpha)$, and assume that $R\leq \sqrt{n}$. Then we have
		\begin{equation}
			\big| \sigma_{f^N,\bepsilon}(\btheta) - \sigma_{f,\bepsilon}(\btheta) \big| =  o\big( n^{-1/2} \big).
		\end{equation}
	\end{lemma}
	According to Lemma~\ref{lemma: variance}, we obtain
	\begin{equation}
		\begin{aligned}
			\frac{\frac{1}{(2\pi)^{d/2}}  \int_{\Omega}  \mathcal{F} f^N(\bzeta) S_1(\bzeta)  d\bzeta}{ \sigma_{f,\bepsilon}(\btheta)} & = \frac{ \frac{1}{(2\pi)^{d/2}}  \int_{\Omega}  \mathcal{F} f^N(\bzeta) S_1(\bzeta)  d\bzeta}{ \sigma_{f^N,\bepsilon}(\btheta)} \cdot \frac{\sigma_{f^N,\bepsilon}(\btheta)}{\sigma_{f,\bepsilon}(\btheta)} \\
			& =  \frac{\frac{1}{(2\pi)^{d/2}}  \int_{\Omega}  \mathcal{F} f^N(\bzeta) S_1(\bzeta)  d\bzeta}{ \sigma_{f^N,\bepsilon}(\btheta)} \cdot ( 1 \pm o(n^{-1/2}))
		\end{aligned}
	\end{equation}
	which implies that
	\begin{equation}
		\frac{ \frac{1}{(2\pi)^{d/2}}  \int_{\Omega}  \mathcal{F} f^N(\bzeta) S_1(\bzeta)  d\bzeta}{ \sigma_{f,\bepsilon}(\btheta)}  \Rightarrow \calN(0,1),~as~n\rightarrow \infty.
	\end{equation}
	As for the case $u\geq 2$, we have
	\begin{equation}
		\begin{aligned}
			& \EE \Big| \frac{1}{(2\pi)^{d/2}}  \int_{\Omega}  \mathcal{F} f^N(\bzeta) \sum_{u=2}^n S_u(\bzeta) e^{i\bzeta^T\btheta} d\bzeta  \Big|  \leq  \frac{1}{(2\pi)^{d/2}}   \int_{\Omega}  \Big| \mathcal{F} f^N(\bzeta) \Big| \EE \big| \sum_{u=2}^n S_u(\bzeta) \big|  d\bzeta \\
			& \leq  \frac{1}{(2\pi)^{d/2}}   \int_{\Omega}  \Big| \mathcal{F} f^N(\bzeta) \Big| \sqrt{\EE \Big| \sum_{u=2}^n S_u(\bzeta) \Big|^2 } d\bzeta
			= \frac{1}{(2\pi)^{d/2}}   \int_{\Omega}  \Big| \mathcal{F} f^N(\bzeta) \Big| \sqrt{ \sum_{u=2}^n \EE \big| S_u(\bzeta) \big|^2 } d\bzeta  \\
			&\leq \frac{1}{(2\pi)^{d/2}}   \int_{\Omega}  \Big| \mathcal{F} f^N(\bzeta) \Big| \cdot \sqrt{ \sum_{u=2}^n \frac{\tau_{n,2}^u(\bzeta)}{u!} }d\bzeta
			\leq \frac{1}{(2\pi)^{d/2}}   \int_{\Omega}  \Big| \mathcal{F} f^N(\bzeta) \Big| \cdot \tau_{n,2} (\bzeta) d\bzeta  \\
			&\lesssim  2\sigma^2 \big\|f \big\|_{s,\infty,1} \cdot n^{-(2 \wedge s)/2}.
		\end{aligned}
	\end{equation}
	The second line is due to Jensen's inequality; the third inequality is due to the bound on $\EE \big| S_k(\bzeta) \big|^2$ and $\tau_{n,2} \leq 1$, and the last line is due to the definition of $\big\| f \big\|_{\infty,s,1}$ .
	As a result, we obtain
	\begin{equation}
		\begin{aligned}
			\frac{\EE \Big| \frac{1}{(2\pi)^{d/2}}  \int_{\Omega}  \mathcal{F} f^N(\bzeta) \sum_{k=2}^n S_k(\bzeta) e^{i\bzeta^T\btheta} d\bzeta  \Big| }{n^{-1/2} \sqrt{\big\langle \bSigma \nabla f(\btheta) , \nabla f(\btheta)\big\rangle }}
			& \leq  \frac{2\sigma^2 \big\|f \big\|_{s,\infty,1} \cdot n^{-(2 \wedge s)/2} }{n^{-1/2}\sqrt{\big\langle \bSigma \nabla f(\btheta) , \nabla f(\btheta)\big\rangle }}
			&\lesssim \tau n^{-(2 \wedge s)/2 + 1/2}.
		\end{aligned}
	\end{equation}
	It implies that when $n\rightarrow \infty$,
	\begin{equation}
		\label{s_k}
		\frac{\EE \Big| \frac{1}{(2\pi)^{d/2}}  \int_{\Omega}  \mathcal{F} f(\bzeta) \sum_{k=2}^n S_k(\bzeta) e^{i\bzeta^T\btheta} d\bzeta  \Big| }{n^{-1/2} \sqrt{\big\langle \bSigma \nabla f(\btheta) , \nabla f(\btheta)\big\rangle }}  \rightarrow 0,
	\end{equation}
	which implies that it converges to 0 in probability.
	Combine (\ref{s_1}) and (\ref{s_k}), by Slutsky's theorem we show that
	\begin{equation}
		\frac{\sqrt{n} \cdot \calA_2}{ \sqrt{\big\langle \bSigma \nabla f(\btheta) , \nabla f(\btheta)\big\rangle }} \Rightarrow \calN(0,1),~~{\rm as}~~n\rightarrow \infty.
	\end{equation}
	
	As for $\calA_3$, as in the proof of Theorem~\ref{theorem: bias} we have
	\begin{equation}
		\label{A_3}
		\frac{\sqrt{n} \cdot \calA_3}{ \sqrt{\big\langle \bSigma \nabla f(\btheta) , \nabla f(\btheta)\big\rangle }} \leq \frac{ \big\| f \big\|_{s,\infty,1} \cdot \sqrt{n} \cdot R^{-s}}{ \sqrt{\big\langle \bSigma \nabla f(\btheta) , \nabla f(\btheta)\big\rangle }}  \leq  \tau \sqrt{n} \cdot R^{-s}
	\end{equation}
	which goes to zero as $n\rightarrow \infty$.
	
	Together with (\ref{A_1}), (\ref{s_1}) and (\ref{A_3}), we have showed that
	\begin{equation}
		\frac{g(\bar{\bx}) - f(\btheta)}{\sigma_{f,\bepsilon}(\btheta)}  \Rightarrow \calN(0,1),~~{\rm as}~~n\rightarrow \infty.
	\end{equation}
	
\end{proof}

\subsection{Proof of Theorem~\ref{theorem: minimax_lower_2}}

The method we use to prove Theorem~\ref{theorem: minimax_lower_2} is new and different from the proof of Theorem 2.2 in~\cite{koltchinskii2018efficient}
and it can be applied to the one sample situation under model (\ref{model}).
\begin{proof}
	We consider
	$$
	\bx = \btheta + \bxi,~\btheta\in \RR^d,~\bxi\sim \mathcal{N}(0; n^{-1}\cdot \bI_d)
	$$
	with unknown mean $\btheta$.
	Let $\bTheta :=\{ \btheta_0,...,\btheta_{M-1}\}$ be a set of $M = 2^d$ points such that $\|\btheta_i\| = 8\varepsilon$, $\|\btheta_i - \btheta_j\| \geq 2 \varepsilon$, $0\leq i,j\leq M-1$, $i\neq j$, where $\varepsilon \leq 1/8$.
	Let $\varphi$ be a function with compact support in $[0,1]$ and $\varphi(0) = a>0$ with $a$ being a constant $a\leq 1$. Based on $\varphi$, we define $\tilde{\varphi}: \RR^d \rightarrow \RR$
	such that $\tilde{\varphi}(\bt) := \varphi(\|\bt\|^2)$ and $\|\tilde{\varphi}\|_{s,\infty,1} \leq B$ (Note that such $\tilde{\varphi}$ exists and one can show $B \lesssim d^s$). For $i\in \{0,1,...,M-1\}$ and $\ell = 1,...,d$, we denote by $e_{\ell i}\in \{-1,1\}$ as
	i.i.d. Rademacher random variables.
	Then for each $\ell=1,...,d$ we define the following random functions:
	\begin{equation}
	f_{\ell}(\btheta) := \sum\limits_{i=0}^{M-1} e_{\ell i} \varepsilon^s \tilde{\varphi}\Big(\frac{\btheta - \btheta_i}{\varepsilon}\Big),~\btheta \in \RR^d.
	\end{equation}
	It is easy to see that $f_{\ell}(\btheta_i) =  a \varepsilon^s$ with $e_{\ell i}=1$ and $f_{\ell}(\btheta_i) =  -a \varepsilon^s$ with $e_{\ell i}=-1$, which implies that
	\begin{equation}
	e_{\ell i} = {\rm sign}(f_{\ell}(\btheta_i)),~\forall~i=0,...,M-1,~{\rm and}~\ell=1,...,d.
	\end{equation}
	Given $\varphi$ is compactly supported in $[-1,1]$, the functions $\varepsilon^s\tilde{\varphi}((\btheta-\btheta_i)/\varepsilon)$, $i=0,...,M-1$ have disjoint supports and are supported in the ball centered at $\btheta_i$ with radius $\varepsilon$.
	This further implies that $\|f_{\ell}\|_{s,\infty,1} \leq B$ given $\|\tilde{\varphi}\|_{s,\infty,1} \leq B$, and $0<\varepsilon \leq 1/8$.
	
	For now, we assume that for some $\delta >0$
	\begin{equation}
	\label{lowerbound2: assumption1}
	\inf_T \sup\limits_{\|f\|_{s,\infty,1}\leq B}\sup\limits_{\|\btheta\|\leq 1} \EE_{\btheta} (T(\bx) - f(\btheta))^2  < \delta^2,
	\end{equation}
	which immediately implies that
	\begin{equation}
	\label{lowerbound2: assumption2}
	\inf_T  \max\limits_{1\leq \ell \leq d} \max_{\btheta \in \bTheta} \EE_{\btheta} (T(\bx) - f_{\ell}(\btheta))^2  < \delta^2.
	\end{equation}
	This essentially means that for each $\ell=1,...,d$, there exists an estimator $T_{\ell}(\bx)$ such that
	\begin{equation}
	\label{assumption: upper}
	\max_{\btheta \in \bTheta} \EE_{\btheta} (T_{\ell}(\bx) - f_{\ell}(\btheta))^2  < \delta^2,~\ell=1,...,d,
	\end{equation}
	which leads to
	\begin{equation}
	\label{average_risk}
	\frac{1}{M} \sum_{i=0}^{M-1} \sum_{\ell=1}^d \EE_{\btheta_i} (T_{\ell}(\bx) - f_{\ell}(\btheta_i))^2 < d\delta^2.
	\end{equation}
	We denote by $\hat{T}(\bx) := (T_1(\bx),...,T_{d}(\bx))^T \in \RR^d$, and $\hat{f}(\btheta):=(f_1(\btheta),...,f_d(\btheta))\in \RR^d$, then we can rewrite (\ref{assumption: upper}) as
	\begin{equation}
	\frac{1}{M} \sum_{i=0}^{M-1} \EE_{\btheta_i} \big\| \hat{T}(\bx) - \hat{f}(\btheta_i)\big\|^2 < d\delta^2.
	\end{equation}
	
	Now we switch to consider the estimation problem of the random vector $\hat{f}(\btheta)\in\RR^d$ over the parameter space $\bTheta$ based on the observation $\bx$ with the prior
	$\Pi:=(\btheta,\be)$, where $\be \in \{-1,1\}^{d\times M}$ and is independent of $\btheta$. We further assume that $\btheta$ is uniformly distributed in $\bTheta$, and the entries of $\be$, $e_{\ell i}$ are i.i.d.
	Rademacher random variables for all $\ell=1,...,d$ and $i=1,...,M$.
	
	Given the prior $\Pi$, each entry of $\hat{f}$ only takes values in $\{a\varepsilon^s, -a\varepsilon^s \}$.
	Condition on $\bx$, we further assume that $\PP\{\hat{f}_{\ell} = a\varepsilon^s | \bx \} = p_{\ell}(\bx)$. Due to the independence of the entries of $\be$,
	we can define the Bayes estimator of $\hat{f}$ as $\EE[\hat{f}|\bx]$ with $\ell$-th entry as
	\begin{equation}
	\EE[\hat{f}_{\ell}| \bx] = a\varepsilon^s p_{\ell}(\bx) - a\varepsilon^s(1-p_{\ell}(\bx)).
	\end{equation}
	We denote by $R_{\Pi}(T)$ the average risk of an estimator $T$ with respect to the prior $\Pi$. Then due to (\ref{average_risk}) and the definition of Bayes estimator, we have
	\begin{equation}
	\label{bound: upper_bayes}
	R_{\Pi}(\EE[\hat{f}|\bx]) \leq 	R_{\Pi}(\hat{T}) < d\delta^2.
	\end{equation}
	On the other hand,
	\begin{equation}
	\label{formula: bayes_risk}
	R_{\Pi}(\EE[\hat{f}|\bx]):=\EE_{\bx} \big[\EE_{\Pi} \big\| \EE[\hat{f}|\bx]-\hat{f} \big\|^2 | \bx\big] = 4\varepsilon^{2s} \sum_{\ell=1}^d \EE_{\bx} \big[ \EE_{\Pi} [p_{\ell}(\bx)(1-p_{\ell}(\bx))|\bx] \big].
	\end{equation}
	Now we consider the quantity $\EE_{\Pi} [p_{\ell}(\bx)(1-p_{\ell}(\bx))|\bx]$. We denote by $\phi_i:=p(\bx| \btheta_i,\sigma^2\bI_d )$ the multivariate Gaussian density with mean $\btheta_i$ and covariance matrix
	$\sigma^2\bI_d$. Then
	\begin{equation}
	\label{equation: bayes}
	\begin{aligned}
	\EE_{\Pi} [p_{\ell}(\bx)(1-p_{\ell}(\bx))|\bx] &= \EE_{\Pi} \Big[ \frac{(\sum_{i=0}^M \mathbf{1}(e_{\ell i }=1)\phi_i)(\sum_{i'=0}^M \mathbf{1}(e_{\ell i' }=-1)\phi_{i'}) }{(\sum_{i=0}^M \phi_i)^2}\Big] \\
	&= \EE_{\Pi} \Big[ \frac{(\sum_{i,i'=0,i\neq i'}^M \mathbf{1}(e_{\ell i }=1) \mathbf{1}(e_{\ell i' }=-1)\phi_i\phi_{i'} }{(\sum_{i=0}^M \phi_i)^2}\Big] \\
	&= \frac{1}{4}\EE_{\Pi} \Big[ \frac{(\sum_{i,i'=0,i\neq i'}^M  \mathbf{1}(i\neq i') \phi_i\phi_{i'} }{(\sum_{i=0}^M \phi_i)^2}\Big] \\
	& = \frac{1}{4}\EE_{\Pi} \Big[ 1 - \frac{\sum_{i}^M   \phi_i^2  }{(\sum_{i=0}^M \phi_i)^2}\Big],
	\end{aligned}
	\end{equation}
	where the third equality is due to the fact that $\EE_{\Pi}[  \mathbf{1}(e_{\ell i }=1) \mathbf{1}(e_{\ell i' }=-1) | \bx] = \mathbf{1}\{ i\neq i'\}/4$ given $e_{\ell i}$'s are i.i.d. Rademacher random variables.
	Plug (\ref{equation: bayes}) into (\ref{formula: bayes_risk}), we obtain
	\begin{equation}
	R_{\Pi}(\EE[\hat{f}|\bx]) = d\varepsilon^{2s} \EE_{\bx} \Big[\EE_{\Pi} \Big[ 1 - \frac{\sum_{i}^M   \phi_i^2  }{(\sum_{i=0}^M \phi_i)^2} \big| \bx \Big] \Big] .
	\end{equation}
	Note that $\EE_{\Pi} \Big[ 1 - \frac{\sum_{i}^M   \phi_i^2  }{(\sum_{i=0}^M \phi_i)^2} \big| \bx \Big]$ is a function of the random variable $\bx$. The density function of the marginal distribution of $\bx$ is given by
	\begin{equation}
	p(\bx) = \frac{1}{M} \sum_{i=0}^M \phi_i.
	\end{equation}
	As a result,
	\begin{equation}
	\label{formula: bayes_risk_lower}
	\begin{aligned}
	\EE_{\bx} \Big[\EE_{\Pi} \Big[\frac{\sum_{i}^M   \phi_i^2  }{(\sum_{i=0}^M \phi_i)^2} \big| \bx \Big] \Big] & \leq \EE_{\bx} \Big [\frac{\max_{i} \phi_i}{\sum_{i=0}^M \phi_i}  \Big]
	= M^{-1}\int_{\RR^d}  \max_{i} \phi_i d\bx \\
	& = M^{-1}\int_{\RR^d} \frac{1}{(\sqrt{2\pi\sigma})^d} e^{\frac{-\min_{i} \|\bx - \btheta_i\|^2}{2\sigma^2}} d\bx  \\
	& \leq  M^{-1}\int_{\RR^d} \frac{1}{(\sqrt{2\pi\sigma})^d} e^{\frac{-[(\|\bx\| - 8\varepsilon)_+]^2}{2\sigma^2}} d\bx \\
	& =   M^{-1}\int_{\RR^d} \exp \{ (\|\by\|^2-[(\|\by\| - 8\varepsilon/\sigma)_+]^2)/2 \} p(\by | \mathbf{0}, \bI_d )d\by \\
	& \leq M^{-1}\int_{\RR^d} \exp \{ 8\varepsilon \|\by\|/\sigma  \vee 32\varepsilon^2/\sigma^2 \} p(\by | \mathbf{0}, \bI_d )d\by \\
	& \leq M^{-1} (e^{16\varepsilon\sqrt{d}/\sigma} \vee e^{32\varepsilon^2/\sigma^2}).
	\end{aligned}
	\end{equation}
	where $(x)_+ :=\max\{x,0 \}$ and $p(\by | \mathbf{0}, \bI_d )$ denotes the density of an isotropic Gaussian random vector. The third line is due to
	$\min_{i} \|\bx - \btheta_i\| \geq \min_i | \|\bx\| - \|\btheta_i\||$ and $\|\btheta_i\| = 8\varepsilon$. Set $\varepsilon = \sqrt{d/n}/\beta $ with $\beta = \max\{16/\log1.5, \sqrt{32/\log1.5}  \}$. Then from (\ref{formula: bayes_risk_lower}), we have
	\begin{equation}
	\label{bound: upper_bayes_risk}
	\EE_{\bx} \Big[\EE_{\Pi} \Big[\frac{\sum_{i}^M   \phi_i^2  }{(\sum_{i=0}^M \phi_i)^2} \big| \bx \Big] \Big] \leq \Big(\frac{3}{4}\Big)^d \leq \frac{3}{4},~\forall~d\geq 1.
	\end{equation}
	Combine (\ref{bound: upper_bayes_risk}) and (\ref{equation: bayes}), we obtain
	\begin{equation}
	\label{bound: lower_bayes}
	R_{\Pi}(\EE[\hat{f}|\bx]) \geq \frac{1}{4} d\varepsilon^{2s}.
	\end{equation}
	Taking $\delta:= \varepsilon^{s}/2$, (\ref{bound: lower_bayes}) contradicts (\ref{bound: upper_bayes}), which means that for $d\geq 1$, we have
	\begin{equation}
	\inf_T \sup\limits_{\|f\|_{s,\infty,1}\leq B}\sup\limits_{\|\btheta\|\leq 1} \EE_{\btheta} (T(\bx) - f(\btheta))^2  \geq  \delta^2 = \frac{\varepsilon^{2s}}{4} \gtrsim \Big(\frac{d}{n}\Big)^s.
	\end{equation}
    Especially, when $d = n^{\alpha}$ with some $\alpha\in (0,1)$, we obtain
    \begin{equation}
    	\inf_T \sup\limits_{\|f\|_{s,\infty,1}\leq B}\sup\limits_{\|\btheta\|\leq 1} \EE_{\btheta} (T(\bx) - f(\btheta))^2  \gtrsim n^{-s(1-\alpha)}.
    \end{equation}
	This completes the proof of Theorem~\ref{theorem: minimax_lower_2}.
	
\end{proof}

\subsection{Proof of Theorem~\ref{theorem: minimax_lower_1}}

\begin{proof}
	In order to simplify the presentation, we will continue to use some of the notations already defined in the proof of Theorem~\ref{theorem: minimax_lower_2}.
	For $i\in \{0,1,...,M-1\}$ and $\ell = 1,...,d$, we denote by $b_{\ell}(i)\in \{0,1\}$ as the $\ell$-th binary digit of $i$ so that $i=\sum^d_{\ell=1} b_{\ell}(i)2^{d-\ell}$.
	Similarly, we consider the following candidate functions:
	\begin{equation}
	f_{\ell}(\btheta) := \sum\limits_{i=0}^{M-1} (2b_{\ell}(i)-1) \varepsilon \tilde{\varphi}\Big(\frac{\btheta - \btheta_i}{\varepsilon}\Big),~\btheta \in \RR^d.
	\end{equation}
	Note that $f_{\ell}(\btheta_i) =  a \varepsilon$ with $b_{\ell}(i)=1$ and $f_{\ell}(\btheta_i) =  -a \varepsilon$ with $b_{\ell}(i)=0$, which implies that
	\begin{equation}
	b_{\ell}(i) = \frac{1+{\rm sign}(f_{\ell}(\btheta_i))}{2},~i=0,...,M-1,~and~\ell=1,...,d.
	\end{equation}
	Given $\varphi$ is compactly supported in $[-1,1]$, the functions $\varepsilon\tilde{\varphi}((\btheta-\btheta_i)/\varepsilon)$, $i=0,...,M-1$ have disjoint supports, which implies that $\|f_{\ell}\|_{s,\infty,1} \leq B$ due to  $\|\tilde{\varphi}\|_{s,\infty,1} \leq B$ and $\varepsilon \leq 1/8$.
	
	For now, we assume that for some $\delta >0$
	\begin{equation}
	\label{lowerbound2: assumption1}
	\inf_T \sup\limits_{\|f\|_{s,\infty,1}\leq B}\sup\limits_{\|\btheta\|\leq 1} \EE_{\btheta} (T(\bx) - f(\btheta))^2  < \delta^2,
	\end{equation}
	which immediately implies that
	\begin{equation}
	\label{lowerbound2: assumption2}
	\inf_T  \max\limits_{1\leq \ell \leq d} \max_{\btheta \in \Theta} \EE_{\btheta} (T(\bx) - f_{\ell}(\btheta))^2  < \delta^2.
	\end{equation}
	This essentially means that for each $\ell=1,...,d$, there exists an estimator $T_{\ell}(\bx)$ such that
	\begin{equation}
	\max_{\btheta \in \bTheta} \EE_{\btheta} (T_{\ell}(\bx) - f_{\ell}(\btheta))^2  < \delta^2,~\ell=1,...,d.
	\end{equation}
	By Markov's inequality, we get
	\begin{equation}
	\max_{\btheta \in \bTheta} \PP_{\btheta} \Big\{  |T_{\ell}(\bx) - f_{\ell}(\btheta)| \geq \frac{a \varepsilon}{2}  \Big\} \leq \frac{4\delta^2}{ a^2 \varepsilon^{2}}.
	\end{equation}
	Take $\delta^2:= a^2\varepsilon^{2}/16$, we have
	\begin{equation}
	\label{inequality: prob1}
	\max_{\btheta \in \bTheta} \PP_{\btheta} \Big\{  |T_{\ell}(\bx) - f_{\ell}(\btheta)| \geq \frac{a \varepsilon}{2}  \Big\} \leq \frac{1}{4}.
	\end{equation}
	Denote the event $\mathcal{E}$ as
	$$
	\mathcal{E} := \Big\{    |T_{\ell}(\bx) - f_{\ell}(\btheta_i)| <  \frac{a \varepsilon}{2}  \Big\}.
	$$
	On this event, we have ${\rm sign}(T_{\ell}(\bx)) = {\rm sign}(f_{\ell}(\btheta_i))$, for $\ell=1,...,d$.
	Therefore for $i=0,...,M-1$
	\begin{equation}
	\label{inequality: sign}
	\PP_{\btheta_i} \{{\rm sign}(T_{\ell}(\bx)) \neq {\rm sign}(f_{\ell}(\btheta_i)) \}\leq  \PP_{\btheta_i} \Big\{  |T_{\ell}(\bx) - f_{\ell}(\btheta_i)| \geq \frac{a \varepsilon}{2}  \Big\} \leq \frac{1}{4}.
	\end{equation}
	
	We define
	\begin{equation*}
	\hat{\omega}:= ((1+{\rm sign}(T_1(\bx)))/2,...,(1+{\rm sign}(T_d(\bx)))/2)^T \in \{0,1\}^d;
	\end{equation*}
	and for $i=0,1,...,M-1$
	\begin{equation}
	\omega(\btheta_i):= (b_1(i),...,b_d(i))^T.
	\end{equation}
	Let $\Lambda:=\{ 0,1 \}^d$ be the set of all binary sequences of length $d$, then it is easy to check that $\Lambda = \{\omega(\btheta): \btheta \in \bTheta \}$.
	Let $\{ P_{\btheta}: \btheta \in \bTheta \}$ be a set of $2^d$ Gaussian measures with mean $\btheta_i$ and covariance matrix $\sigma^2\bI_d$.
	We state a user-friendly version of Assouad's Lemma as follows:
	\begin{lemma}[Assouad's Lemma]
		\label{lemma: Assouad}
		If the KL divergence $K(P_{\btheta_i} || P_{\btheta_j})\leq \alpha <\infty$ for any $\omega(\btheta_i)$, $\omega(\btheta_j)\in \Lambda$ with $\rho(\omega(\btheta_i),\omega(\btheta_j))=1$.
		Let $\EE_{\btheta_i}$ denote the corresponding expectation to the probability measure $P_{\btheta_i}$. Then
		\begin{equation}
		\inf_{\hat{\omega}}\max_{\omega(\btheta_i) \in \Lambda} \EE_{\btheta_i} \rho(\hat{\omega},\omega(\btheta_i)) \geq \frac{d}{2} \max \Big\{\frac{1}{2}e^{-\alpha}, 1-\sqrt{\alpha/2} \Big\}
		\end{equation}
		where $\rho$ denotes the Hamming distance of the binary sequences.
	\end{lemma}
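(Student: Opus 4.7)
The plan is to prove Assouad's Lemma by the classical route: reduce the maximum risk on the hypercube $\Lambda = \{0,1\}^d$ to a sum of coordinatewise two-point testing problems, and then bound each two-point testing error via information-theoretic inequalities (Pinsker for the $1-\sqrt{\alpha/2}$ branch, Bretagnolle--Huber for the $\frac{1}{2}e^{-\alpha}$ branch).

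First I would lower-bound the worst-case risk by the average over $\Lambda$ and expand the Hamming distance coordinatewise:
\begin{equation*}
\max_{\omega \in \Lambda} \EE_{\omega} \rho(\hat{\omega},\omega) \;\geq\; \frac{1}{2^d} \sum_{\omega \in \Lambda} \EE_{\omega} \rho(\hat{\omega},\omega) \;=\; \frac{1}{2^d} \sum_{j=1}^d \sum_{\omega \in \Lambda} \PP_{\omega}\{\hat{\omega}_j \neq \omega_j\}.
\end{equation*}
For each coordinate $j$, I would partition $\Lambda$ into $2^{d-1}$ pairs $(\omega,\omega^{(j)})$ that agree on every coordinate except the $j$-th, so the inner double sum becomes
\begin{equation*}
\sum_{\omega \in \Lambda} \PP_{\omega}\{\hat{\omega}_j \neq \omega_j\} \;=\; \sum_{\text{pairs}} \Bigl( \PP_{\omega}\{\hat{\omega}_j \neq \omega_j\} + \PP_{\omega^{(j)}}\{\hat{\omega}_j \neq \omega^{(j)}_j\} \Bigr).
\end{equation*}

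Next I would reduce each paired sum to a two-point testing problem. Since $\omega_j \neq \omega^{(j)}_j$, the quantity inside the parentheses is bounded below by the minimum average error of any test between $P_{\omega}$ and $P_{\omega^{(j)}}$, and the Neyman--Pearson identity gives
\begin{equation*}
\inf_{\psi}\bigl[ \PP_{\omega}\{\psi = 1\} + \PP_{\omega^{(j)}}\{\psi = 0\} \bigr] \;=\; 1 - \mathrm{TV}(P_{\omega}, P_{\omega^{(j)}}).
\end{equation*}
Because $\omega$ and $\omega^{(j)}$ are at Hamming distance $1$, the hypothesis of the lemma yields $K(P_{\omega}\|P_{\omega^{(j)}}) \leq \alpha$. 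Applying Pinsker's inequality produces $1-\mathrm{TV} \geq 1-\sqrt{\alpha/2}$, while applying the Bretagnolle--Huber inequality produces $1-\mathrm{TV} \geq \tfrac{1}{2} e^{-\alpha}$; taking the maximum yields the two branches in the lemma.

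Collecting terms, each coordinate contributes $2^{d-1} \max\{\tfrac{1}{2}e^{-\alpha}, 1-\sqrt{\alpha/2}\}$ to the double sum, and after dividing by $2^d$ and multiplying by $d$ I obtain
\begin{equation*}
\inf_{\hat{\omega}} \max_{\omega \in \Lambda} \EE_{\omega} \rho(\hat{\omega},\omega) \;\geq\; \frac{d}{2}\, \max\!\Bigl\{ \tfrac{1}{2} e^{-\alpha},\; 1-\sqrt{\alpha/2} \Bigr\},
\end{equation*}
which is exactly the stated bound. The main obstacle is the two-point reduction: one must verify that restricting $\hat{\omega}_j$ to $\{0,1\}$ (rather than allowing it to be arbitrary) does not lose anything in the Neyman--Pearson step, and that the Pinsker and Bretagnolle--Huber inequalities are applied in their correct directions so that both branches of the max emerge cleanly; beyond that, the combinatorial accounting over pairs and coordinates is mechanical.
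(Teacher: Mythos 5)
Your proof is correct: it is the standard textbook argument for Assouad's lemma (average over the hypercube, coordinatewise decomposition of the Hamming distance, pairing of vertices at Hamming distance one, reduction to two-point testing via the Neyman--Pearson identity $\inf_\psi[P(\psi=1)+Q(\psi=0)]=1-\mathrm{TV}(P,Q)=\int\min(dP,dQ)$, and then Pinsker's inequality for the $1-\sqrt{\alpha/2}$ branch and the bound $\int\min(dP,dQ)\geq\tfrac12 e^{-K(P\|Q)}$ for the other branch). The paper itself gives no proof of this lemma --- it is stated as a known result, citing Lemma 2.12 of Tsybakov's book, and your argument is essentially the proof given there, so there is nothing to compare beyond noting that you have supplied the omitted proof. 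The one point you flag as a possible obstacle, namely whether restricting $\hat{\omega}_j$ to $\{0,1\}$ loses anything, is harmless: taking $\psi:=\mathbf{1}\{\hat{\omega}_j\neq\omega_j\}$ gives $\PP_{\omega}\{\hat{\omega}_j\neq\omega_j\}=\PP_{\omega}\{\psi=1\}$ and $\PP_{\omega^{(j)}}\{\hat{\omega}_j\neq\omega^{(j)}_j\}\geq\PP_{\omega^{(j)}}\{\psi=0\}$ because $\{\hat{\omega}_j=\omega_j\}\subset\{\hat{\omega}_j\neq\omega^{(j)}_j\}$, so the two-point reduction goes through even for non-binary estimators.
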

	On one hand,
	\begin{equation}
	\label{contradiction: 21}
	\begin{aligned}
	\EE_{\btheta_i} \rho(\hat{\omega},\omega(\btheta_i)) &= \sum_{\ell=1}^d \PP_{\btheta_i}\{ \hat{\omega}_{\ell} \neq \omega_{\ell}(\btheta_i) \} = \sum_{\ell=1}^d \PP_{\btheta_i}\{ {\rm sign}(T_{\ell}(\bx))\neq {\rm sign}(f_{\ell}(\btheta_i))\} \leq d/4 .
	\end{aligned}
	\end{equation}
	where the inequality is due to (\ref{inequality: sign}).
	On the other hand, take $\varepsilon = \sigma/24$, for any $\btheta_i$, $\btheta_j\in \Lambda$,
	\begin{equation}
	K(P_{\btheta_i} || P_{\btheta_j})= \frac{1}{2} \langle \bSigma^{-1} (\btheta_i - \btheta_j) , (\btheta_i - \btheta_j) \rangle \leq  \frac{128  \varepsilon^2}{\sigma^2}  \leq \frac{2}{9}<\infty.
	\end{equation}
	By Lemma~\ref{lemma: Assouad}, we have
	\begin{equation}
	\inf_{\hat{\omega}}\max_{\omega(\btheta_i) \in \Lambda} \EE_{\btheta_i} \rho(\hat{\omega},\omega(\btheta_i)) \geq \frac{d}{3},
	\end{equation}
	which contradicts (\ref{contradiction: 21}). As a consequence, bound (\ref{lowerbound2: assumption1}) does not hold for $\delta^2 = a^2\varepsilon^{2}/16$ and $\varepsilon =  \sigma/24$.
	This means that for some numerical constant $c_1$ we have
	\begin{equation*}
	\inf_T \sup\limits_{\|f\|_{s,\infty,1}\leq 1}\sup\limits_{\|\btheta\|\leq 1} \EE_{\btheta} (T(\bx) - f(\btheta))^2 \geq c_1 \sigma^2.
	\end{equation*}
    Setting $\sigma^2 = n^{-1}$ yields
    \begin{equation*}
    	\inf_T \sup\limits_{\|f\|_{s,\infty,1}\leq B}\sup\limits_{\|\btheta\|\leq 1} \EE_{\btheta} (T(\bx) - f(\btheta))^2 \geq c_1 n^{-1}.
    \end{equation*}
\end{proof}

\subsection{Proof of Theorem~\ref{theorem: efficiency}}
\begin{proof}
	The main idea of the proof is based on an application of van Trees inequality, see~\cite{gill1995applications} and a method developed in Theorem 2.4 of~\cite{koltchinskii2018efficient}.
	Firstly, we prove the following lemma.
	\begin{lemma}
		\label{Lemma: lowerbound_1}
		For all $\btheta \in \RR^d$ such that $\|\btheta - \btheta_0\| \leq c n^{-1/2}$, and for $f\in \calF^s(\RR^d)$ with $s>1$ and condition (\ref{variance: size}) satisfied. Then with some numerical constant $C_1>0$ the following bound holds
		\begin{equation}
		\Big| \frac{ \sigma^2_{f,\bepsilon}(\btheta)}{ \sigma^2_{f,\bepsilon}(\btheta_0)} -1 \Big| \leq C_1 c \tau^2\cdot  n^{-(2\wedge s)/2 + 1/2}
		\end{equation}
	\end{lemma}
	The bound of Lemma~\ref{Lemma: lowerbound_1} implies that
	\begin{equation}
	\label{bound: lower_switch1}
	\begin{aligned}
	&\sup\limits_{\btheta\in \calB(\btheta_0;cn^{-1/2})} \frac{\EE_{\btheta} (T(\bx) - f(\btheta))^2 }{ \sigma^2_{f,\bepsilon}(\btheta)} = \sup\limits_{\btheta\in \calB(\btheta_0;cn^{-1/2})} \frac{\EE_{\btheta} (T(\bx) - f(\btheta))^2 }{ \sigma^2_{f,\bepsilon}(\btheta_0)} \cdot \frac{ \sigma^2_{f,\bepsilon}(\btheta_0)}{ \sigma^2_{f,\bepsilon}(\btheta)} \\
	& \geq \sup\limits_{\btheta\in \calB(\btheta_0;cn^{-1/2})} \frac{\EE_{\btheta} (T(\bx) - f(\btheta))^2 }{ \sigma^2_{f,\bepsilon}(\btheta_0)}  \frac{1}{1 +c \tau^2\cdot  n^{-(2\wedge s)/2 + 1/2}}.
	\end{aligned}
	\end{equation}
	
	Now we switch to bound
	\begin{equation*}
	\sup\limits_{\btheta\in \calB(\btheta_0;cn^{-1/2})} \frac{\EE_{\btheta} (T(\bx) - f(\btheta))^2 }{ \sigma^2_{f,\bepsilon}(\btheta_0)} .
	\end{equation*}
	Set $c_0:=c/\tau$, then for any $t\in [-c_0, c_0]$ and $\bdelta \in \RR^d$, we define
	\begin{equation}
	\btheta_t := \btheta_0 + t\bdelta.
	\end{equation}
	Consider the estimation of the following functional
	\begin{equation*}
	\varphi(t) := f(\btheta_t),~t\in [-c_0,c_0]
	\end{equation*}
	based on an observation $\bx\sim \mathcal{N}(\btheta_t;n^{-1}\bSigma)$ . By choosing $\bdelta:= n^{-1}\bSigma \nabla f(\btheta_0)/ \sigma_{f,\bepsilon}(\btheta_0)$, we have
	\begin{equation}
	\begin{aligned}
	\|t\bdelta\| &\leq \frac{c_0n^{-1} \|\bSigma\|_{op} \|\nabla f(\btheta_0)\| }{ \sigma_{f,\bepsilon}(\btheta_0)} \leq \frac{c_0 n^{-1}\|\bSigma\|_{op}  \|f\|_{s,\infty,1} }{ \sigma_{f,\bepsilon}(\btheta_0)}\leq c_0  \tau n^{-1/2} \leq c n^{-1/2} < 1,
	\end{aligned}
	\end{equation}
	which implies that $\btheta_t \in \calB(\btheta_0;cn^{-1/2})$. As a consequence,
	\begin{equation}
	\label{bound: lower_switch2}
	\sup\limits_{\btheta \in \calB(\btheta_0;cn^{-1/2})} \frac{\EE_{\btheta} (T(\bx) - f(\btheta))^2 }{ \sigma^2_{f,\bepsilon}(\btheta_0)} \geq \sup\limits_{t \in [-c_0 ,c_0]} \frac{\EE_{\btheta} (T(\bx) - \varphi(t))^2 }{ \sigma^2_{f,\bepsilon}(\btheta_0)}.
	\end{equation}
	Let $\pi$ be a prior density on $[-1,1]$ with $\pi(-1) = \pi(1) = 0$ and such that
	\begin{equation}
	J_{\pi} := \int^1_{-1} \frac{(\pi'(s))^2}{\pi(s)} ds < \infty.
	\end{equation}
	Denote $\pi_{c_0}(t) = c_0^{-1}\pi(t/c_0)$, with $t\in[-c_0, c_0]$. Then $J_{\pi_{c_0}} =  J_{\pi}/c_0^2$.
	
	By the van Trees inequality~\cite{gill1995applications}, for any estimator $T(\bx)$ of $\varphi(t)$, the following inequalities hold
	\begin{equation}
	\label{bound: key_inequality}
	\begin{aligned}
	& \sup\limits_{t\in[-c_0,c_0]} \EE_{t} (T(\bx) - \varphi(t))^2 \geq \int^{c_0}_{-c_0} \EE_{t} (T(\bx) - \varphi(t))^2 \pi_{c_0}(t) dt  \\
	& \geq \frac{\big(\int^{c_0}_{-c_0} \varphi'(t)\pi_{c_0}(t)dt\big)^2}{\int^{c_0}_{-c_0} \mathcal{I}(t) \pi_{c_0}(t)dt + J_{\pi}/c_0^2} \geq \frac{\big(\int^{c_0}_{-c_0} \varphi'(t)\pi_{c_0}(t)dt\big)^2}{1+ J_{\pi}/c_0^2}.
	\end{aligned}
	\end{equation}
	The last inequality is due to the fact that when $\bdelta = n^{-1}\bSigma \nabla f(\btheta_0)/ \sigma_{f,\bepsilon}(\btheta_0)$, the Fisher information $\mathcal{I}(t) = n \langle \bSigma^{-1} \bdelta, \bdelta \rangle = 1$.
	It remains to give a lower bound on $\big(\int^{c_0}_{-c_0} \varphi'(t)\pi_{c_0}(t)dt\big)^2$. Recall that $\varphi'(t) = \langle \bdelta ,  \nabla f(\btheta_t)\rangle$ and let
	\begin{align}
	& {\rm I}_0 := \int^{c_0}_{-c_0} \varphi'(0) \pi_{c_0}(t) dt = \int^{c_0}_{-c_0} \langle \bdelta, \nabla f(\btheta_0) \rangle \pi_{c_0}(t) dt = \langle \bdelta, \nabla f(\btheta_0)\rangle \\
	& {\rm I}_1 := \int^{c_0}_{-c_0} (\varphi'(t) - \varphi'(0)) \pi_{c_0}(t) dt .
	\end{align}
	Then we have
	\begin{equation}
	\Big(\int^{c_0}_{-c_0} \varphi'(t)\pi_{c_0}(t)dt\Big)^2 = ({\rm I}_0 + {\rm I}_1)^2 \geq {\rm I}_0^2 - 2|{\rm I}_0||{\rm I}_1| =  \sigma^2_{f,\bepsilon} (\btheta_0) - 2 \sigma_{f,\bepsilon}(\btheta_0) |{\rm I}_1|,
	\end{equation}
	where we used the assumption that $\bdelta = n^{-1}\bSigma \nabla f(\btheta_0)/\sigma_{f,\bepsilon}(\btheta_0)$. Now we turn to bound $|{\rm I}_1|$. Similar to the proof of Lemma~\ref{Lemma: lowerbound_1},
	\begin{equation}
	\begin{aligned}
	|{\rm I}_1| \leq |\varphi'(t) - \varphi'(0)| &=  |\langle \bdelta , \nabla f(\btheta_t) - \nabla f(\btheta_0)\rangle | \leq \|\bdelta \| \|\nabla f(\btheta_t) - \nabla f(\btheta_0)\| \\
	& \leq \frac{n^{-1/2}\|\bSigma\|_{op}\|\nabla f(\btheta_0)\|}{ \sigma_{f,\bepsilon}(\btheta_0)} \cdot   \frac{\|f\|_{s,\infty,1}}{n^{(2\wedge s)/2}} \\
	& \lesssim  c \tau^2 \sigma_{f,\bepsilon}(\btheta_0)n^{-(2\wedge s)/2+1/2}\\
	\end{aligned}
	\end{equation}
	As a result, we have
	\begin{equation}
	\label{bound: lower_numerator}
	\Big(\int^{c_0}_{-c_0} \varphi'(t)\pi_{c_0}(t)dt\Big)^2  \geq \sigma^2_{f,\bepsilon}(\btheta_0) \Big( 1 -  c\tau^2 n^{-(2\wedge s)/2+1/2} \Big).
	\end{equation}
	By plugging (\ref{bound: lower_numerator}) into (\ref{bound: key_inequality}), we get
	\begin{equation}
	\sup\limits_{t\in[-c_0,c_0]} \frac{\EE_{t} (T(\bx) - \varphi(t))^2 }{ \sigma^2_{f,\bepsilon}(\btheta_0) }  \geq  \frac{\big( 1 - c\tau^2 n^{-(2\wedge s)/2+1/2}\big) }{ (1 + J_{\pi}/c_0^2) }
	\end{equation}
	Together with (\ref{bound: lower_switch1}) and (\ref{bound: lower_switch2}), we get
	\begin{equation}
	\begin{aligned}
	\inf_T  \sup\limits_{\|\btheta -\btheta_0\|\leq cn^{-1/2}} \frac{\EE_{\btheta} (T(\bx) - f(\btheta))^2}{ \sigma^2_{f,\bepsilon}(\btheta)} & \geq  \frac{\big( 1 - c\tau^2 n^{-(2\wedge s)/2+1/2}\big) }{(1 + c\tau^2 n^{-(2\wedge s)/2+1/2}) (1 + J_{\pi}/c_0^2) } \\
	& \geq 1 - C_1 \tau^2 \big(  cn^{-(2\wedge s)/2+1/2} + c^{-2} \big)
	\end{aligned}
	\end{equation}
	for some constant $C_1$.
\end{proof}

\subsection{Proof of Theorem~\ref{theorem: adaptive}}
\begin{proof}
	We consider
	\begin{equation}
	\label{difference: g}
	\begin{aligned}
	\hat{g}(\bar{\bx}) - g(\bar{\bx}) & =  \frac{1}{(2\pi)^{d/2}} \Big(  \int_{\hat{\Omega}}  \mathcal{F} f (\bzeta)e^{\langle \hat{\bSigma} \bzeta, \bzeta \rangle/2n} e^{i\bzeta \cdot \bx} d\bzeta
	-\int_{\Omega}  \mathcal{F} f(\bzeta)e^{\langle \bSigma \bzeta, \bzeta \rangle/2n} e^{i\bzeta \cdot \bx} d\bzeta \Big) \\
	& =  \frac{1}{(2\pi)^{d/2}} \Big(  \int_{\Omega}  \mathcal{F} f(\bzeta) \big( e^{\langle (\hat{\bSigma}-\bSigma) \bzeta, \bzeta \rangle/2n}-1 \big) e^{\langle \bSigma \bzeta, \bzeta \rangle/2n} e^{i\bzeta \cdot \bx} d\bzeta  \Big) \\
	& + \frac{1}{(2\pi)^{d/2}}  \int_{(\hat{\Omega} \backslash \Omega) \cup (\Omega \backslash \hat{\Omega})} \mathcal{F} f (\bzeta)e^{\langle \hat{\bSigma} \bzeta, \bzeta \rangle/2n} e^{i\bzeta \cdot \bx} d\bzeta.
	\end{aligned}
	\end{equation}
	In the following, we will bound the two terms respectively. We denote by $\Delta: = e^{\langle (\hat{\bSigma}-\bSigma) \bzeta, \bzeta \rangle/2n}$.  By the concentration bound on the Gaussian covariance matrix, see~\cite{koltchinskii2017}, for any $t>0$ we get with probability at least $1-e^{-t}$
	\begin{equation}
	\label{event: 1}
	\big|  \langle (\hat{\bSigma} -\bSigma)\bzeta , \bzeta \rangle \big| \leq \big\| \hat{\bSigma} -\bSigma \big\|_{op} \big\| \bzeta \big\|^2 \lesssim  \|\bSigma\|_{op}\Big( \sqrt{\frac{\br(\bSigma)}{n}} \bigvee \sqrt{\frac{t}{n}} \Big) \big\| \bzeta \big\|^2,
	\end{equation}
	and
	\begin{equation}
		\label{event: 2}
		\EE \big|  \langle (\hat{\bSigma} -\bSigma)\bzeta , \bzeta \rangle \big| \leq \EE \big\| \hat{\bSigma} -\bSigma \big\|_{op} \big\| \bzeta \big\|^2 \lesssim 	\|\bSigma\|_{op} \cdot  \sqrt{\frac{\br(\bSigma)}{n}}  \cdot \big\| \bzeta \big\|^2.
	\end{equation}
	We denote by $\calE_1$ as the event in (\ref{event: 1}). Conditionally on $\calE_1$,
	when $n$ is large enough, we have
	\begin{equation}
	\label{bound: part1}
	\begin{aligned}
	& \Big| \frac{1}{(2\pi)^{d/2}} \Big(  \int_{\Omega}  \mathcal{F} f(\bzeta) \big( e^{\langle (\hat{\bSigma}-\bSigma) \bzeta, \bzeta \rangle/2n}-1 \big) e^{\langle \bSigma \bzeta, \bzeta \rangle/2n} e^{i\bzeta \cdot \bx} d\bzeta  \Big) \Big|  \\
	&\leq \frac{1}{(2\pi)^{d/2}}   \int_{\Omega}  \Big| \mathcal{F} f(\bzeta) \big( e^{\langle (\hat{\bSigma}-\bSigma) \bzeta, \bzeta \rangle/2n}-1 \big) e^{\langle \bSigma \bzeta, \bzeta \rangle/2n} \Big| d\bzeta  \\
	& \leq \frac{e^2}{(2\pi)^{d/2}} \Big(  \int_{\Omega}  \Big| \mathcal{F} f(\bzeta)\Big| \cdot \big| \Delta-1 \big| d\bzeta  \\
	& \lesssim \frac{e^2}{(2\pi)^{d/2}} \Big(  \int_{\Omega}  \Big| \mathcal{F} f(\bzeta)\Big| \cdot \frac{\|\bSigma\|_{op}}{n} \Big( \sqrt{\frac{\br(\bSigma)}{n}} \bigvee \sqrt{\frac{t}{n}} \Big) \big\| \bzeta \big\|^2 d\bzeta  \\
	&  \lesssim \frac{\big\| f \big\|_{s,\infty,1}}{n^{(2\wedge s)/2}} \Big( \sqrt{\frac{\br(\bSigma)}{n}} \bigvee \sqrt{\frac{t}{n}} \Big) .
	\end{aligned}
	\end{equation}
	Similarly, we have
	\begin{equation}
		\begin{aligned}
		& \EE \Big| \frac{1}{(2\pi)^{d/2}} \Big(  \int_{\Omega}  \mathcal{F} f(\bzeta) \big( e^{\langle (\hat{\bSigma}-\bSigma) \bzeta, \bzeta \rangle/2n}-1 \big) e^{\langle \bSigma \bzeta, \bzeta \rangle/2n} e^{i\bzeta \cdot \bx} d\bzeta  \Big) \Big|
		& \lesssim  \frac{\big\| f \big\|_{s,\infty,1}}{n^{(2\wedge s)/2}} \cdot  \sqrt{\frac{\br(\bSigma)}{n}} .
		\end{aligned}
	\end{equation}
	
	Now we switch to bound the second part on the right hand side of (\ref{difference: g}). We denote by $\hat{R}:= \sup_{\bzeta\in \hat{\Omega}} \|\bzeta\|$. Conditionally on $\calE_1$,
	\begin{equation}
	\begin{aligned}
		& \Big| \frac{1}{(2\pi)^{d/2}}  \int_{(\hat{\Omega} \backslash \Omega) \cup (\Omega \backslash \hat{\Omega})} \mathcal{F} f (\bzeta)e^{\langle \hat{\bSigma} \bzeta, \bzeta \rangle/2n} e^{i\bzeta \cdot \bx} d\bzeta \Big|\\
		&  \leq  \frac{1}{(2\pi)^{d/2}}  \int_{(\hat{\Omega} \backslash \Omega) \cup (\Omega \backslash \hat{\Omega})} \Big| \mathcal{F} f (\bzeta) \Big| e^{\langle \hat{\bSigma} \bzeta, \bzeta \rangle/2n}  d\bzeta  \\
		& \lesssim   \big\| f \big\|_{s,\infty,1} \cdot  (R - |\hat{R} - R|)^{-s}.
		\end{aligned}
	\end{equation}
	Recall that $R \gg 1$, and
	\begin{equation}
	\begin{aligned}
		\Big| R-\hat{R} \Big| &= \Big|  \sqrt{\frac{n}{\br (\bSigma)}} - \sqrt{\frac{n}{ \br ( \hat{\bSigma})}}\Big| .
		\end{aligned}
	\end{equation}
	As we can see that
	\begin{equation}
	\begin{aligned}
	\sqrt{{\rm tr}(\hat{\bSigma})}  &=  \sqrt{{\rm tr}(\sum_{j=1}^{n-1} \widetilde{\bbeta}_j \otimes \widetilde{\bbeta}_j)/(n-1)}
	& = \frac{1}{\sqrt{n-1}} \sqrt{\sum_{j=1}^{n-1} \big\| \widetilde{\bbeta}_j\big\|^2 }.
	\end{aligned}
	\end{equation}
	Suppose that $\{\widetilde{\bbeta}'_{j}\}_{j=1}^{n-1}$ is another independent identical copy of $\{\widetilde{\bbeta}_{j}\}_{j=1}^{n-1}$. Then we have
	\begin{equation}
	\begin{aligned}
	\Big| \sqrt{{\rm tr}(\hat{\bSigma})}  -   \sqrt{{\rm tr}(\hat{\bSigma}')} \Big|
	& =  \Big| \sqrt{ \frac{ {\rm tr}(\sum_{j=1}^{n-1} \widetilde{\bbeta}_j \otimes \widetilde{\bbeta}_j)}{n-1} } - \sqrt{ \frac{ {\rm tr}(\sum_{j=1}^{n-1} \widetilde{\bbeta}_j' \otimes \widetilde{\bbeta}_j')}{n-1} } \Big| \\
	& \leq \frac{1}{n-1} \Big| \sqrt{\sum_{j=1}^{n-1} \big\| \widetilde{\bbeta}_j\big\|^2 } - \sqrt{\sum_{j=1}^{n-1} \big\| \widetilde{\bbeta}'_j\big\|^2 } \Big| \\
	& \leq \frac{1}{n-1} \sqrt{\sum_{j=1}^{n-1} \big\| \widetilde{\bbeta}_j - \widetilde{\bbeta}_j'\big\|^2 } ,
	\end{aligned}
	\end{equation}
	which shows that $\sqrt{{\rm tr}(\hat{\bSigma})}$ is a Lipschitz function of $(\widetilde{\bbeta}_1,...,\widetilde{\bbeta}_{n-1})$.
	
	We use the following Gaussian concentration inequality (in a  bit non-standard fashion, see~\cite{koltchinskii2017}, Section 3 for a similar argument) as in Lemma~\ref{Lemma:isoperimetric}, which is a corollary of the classical Gaussian isoperimetric inequality, see~\cite{gine2016mathematical} chapter 2.
	\begin{lemma}
		\label{Lemma:isoperimetric}
		Let $X_1,...,X_d$ be i.i.d. centered Gaussian random variables in a Hilbert space $\mathbb{H}$ with covariance operator $\Sigma$. Let $f:\mathbb{R}^d \rightarrow \mathbb{R}$ be a function satisfying the following
		Lipschitz condition with Lipschitz constant $L>0$:
		$$
		\Big|f(x_1,...,x_d)-f(x'_1,...,x'_d) \Big| \leq L \Big(\sum\limits_{j=1}^d\|x_j-x'_j\|^2   \Big)^{1/2},~x_1,...,x_d,x'_1,...,x'_d\in \mathbb{H}.
		$$
		Suppose that, for a real number $M$,
		$$
		\mathbb{P}\{f(X_1,...,X_d) \geq M\} \geq 1/4~{\rm and}~\mathbb{P}\{f(X_1,...,X_d) \leq M\} \geq 1/4.
		$$
		Then, there exists a numerical constant $D$ such that for all $t\geq 1$,
		$$
		\mathbb{P}\Big\{ |f(X_1,...,X_d) - M| \geq DL\|\Sigma\|_{op}^{1/2}\sqrt{t} \Big\} \leq e^{-t}.
		$$
	\end{lemma}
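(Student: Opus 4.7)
The plan is to derive this lemma directly from the classical Gaussian concentration inequality for Lipschitz functionals of Gaussian measures (Borell, Sudakov--Tsirelson), which is the standard corollary of the Gaussian isoperimetric inequality. The two ingredients I would use are: (a) concentration of such a functional around \emph{any} median, at the scale $L\|\Sigma\|^{1/2}_{op}$; and (b) the quartile hypothesis on $M$, which forces $M$ to lie within $O(L\|\Sigma\|^{1/2}_{op})$ of the median.

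First, I would bundle $X=(X_1,\ldots,X_d)$ into a single centered Gaussian random element of the product Hilbert space $\mathbb{H}^d$ endowed with the norm $\|(x_1,\ldots,x_d)\|=\bigl(\sum_{j=1}^d\|x_j\|^2\bigr)^{1/2}$. Because the $X_j$ are i.i.d., the covariance operator of $X$ is block-diagonal with $\Sigma$ on each block, so its operator norm equals $\|\Sigma\|_{op}$. By hypothesis $f$ is $L$-Lipschitz with respect to exactly this norm. Invoking the standard Gaussian concentration theorem then gives, for any median $\mathrm{Med}$ of $f(X)$ and every $u\ge 0$,
\begin{equation*}
\mathbb{P}\{f(X)-\mathrm{Med}\ge u\} \vee \mathbb{P}\{\mathrm{Med}-f(X)\ge u\} \le \exp\!\bigl(-u^2/(2L^2\|\Sigma\|_{op})\bigr).
\end{equation*}

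Next I would compare $M$ with $\mathrm{Med}$. Choose $u_0 := L\|\Sigma\|^{1/2}_{op}\sqrt{2\log 4}$, so that the right-hand side above is strictly less than $1/4$. If $M>\mathrm{Med}+u_0$, then $\mathbb{P}\{f(X)\ge M\}\le \mathbb{P}\{f(X)-\mathrm{Med}\ge u_0\}<1/4$, contradicting the hypothesis; symmetrically $M\ge \mathrm{Med}-u_0$. Hence $|M-\mathrm{Med}|\le u_0$. Combining this with the triangle inequality and the concentration bound, for any $s\ge 2u_0$,
\begin{equation*}
\mathbb{P}\{|f(X)-M|\ge s\} \le \mathbb{P}\{|f(X)-\mathrm{Med}|\ge s-u_0\} \le 2\exp\!\bigl(-(s-u_0)^2/(2L^2\|\Sigma\|_{op})\bigr).
\end{equation*}
Setting $s=DL\|\Sigma\|^{1/2}_{op}\sqrt{t}$ with $t\ge 1$ and choosing $D$ large enough (depending only on $\log 4$) that $(s-u_0)^2/(2L^2\|\Sigma\|_{op})\ge t+\log 2$ yields the claimed bound $e^{-t}$.

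There is no real obstacle here; the argument is entirely classical once Gaussian concentration around the median is invoked. The two points requiring a line of justification are the identification of the relevant operator norm in the product Hilbert space (immediate from the block-diagonal covariance structure) and the quartile-to-median comparison, which is the usual device for converting the standard median-form concentration into the form stated in the lemma.
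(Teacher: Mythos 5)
Your proof is correct. The paper does not actually prove this lemma---it is stated as a known corollary of the classical Gaussian isoperimetric inequality, with pointers to the literature---and your derivation (concentration of $f(X)$ around its median at scale $L\|\Sigma\|_{op}^{1/2}$ in the product Hilbert space, followed by the quartile-to-median comparison to replace the median by $M$) is exactly the standard argument being invoked. One cosmetic point: with $u_0=L\|\Sigma\|_{op}^{1/2}\sqrt{2\log 4}$ the exponential tail bound \emph{equals} $1/4$ rather than being strictly below it, so to get the contradiction you should either use the sharper Gaussian-tail form $\Phi^{c}\big(u_0/(L\|\Sigma\|_{op}^{1/2})\big)<1/4$ or simply enlarge $u_0$ by a constant factor; this affects nothing downstream.
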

	By Lemma~\ref{Lemma:isoperimetric}, we get
	for all $t\geq 1$, with probability at least $1-e^{-t}$
	\begin{equation}
	\Big| \sqrt{{\rm tr}(\hat{\bSigma})}  -   \sqrt{{\rm tr}(\bSigma)} \Big| \lesssim  \big\|\bSigma\big\|_{op}^{1/2}\sqrt{ \frac{ t}{n}},
	\end{equation}
	together with (\ref{event: 1}), it implies that with high probability for some $\beta >0$
	\begin{equation}
	\big| \hat{R} - R \big| = O(n^{-\beta}) .
	\end{equation}
    We denote by event $\calE_2: = \{| \hat{R} - R | \leq \eta(t)\}$ and $M:= (\hat{\Omega} -\Omega) \cup (\Omega - \hat{\Omega})$. Then on event $\calE_1$ and $\calE_2$, similarly as in
	(\ref{bound: part1}) we have
	\begin{equation}
	\label{bound: part2}
	\begin{aligned}
	& \Big| \frac{1}{(2\pi)^{d/2}}  \int_{(\hat{\Omega} -\Omega) \cup (\Omega - \hat{\Omega})} \mathcal{F} f(\bzeta)e^{\langle \hat{\bSigma} \bzeta, \bzeta \rangle/2n} e^{i\bzeta \cdot \bx} d\bzeta \Big|
	 \lesssim  \big\| f \big\|_{s,\infty,1} \cdot R^{-s}\\
	\end{aligned}
	\end{equation}
	given that $R\gg 1$.
	To sum up, we obtain for any $t>0$ with probability at least $1-e^{-t}$
	\begin{equation}
	\big| \hat{g}(\bar{\bx}) - g(\bar{\bx})\big| \lesssim \frac{\big\| f \big\|_{s,\infty,1}}{n^{(2\wedge s)/2}} \cdot \Big( \sqrt{\frac{\br(\bSigma)}{n}} \bigvee \sqrt{\frac{t}{n}} \Big)  + \big\| f \big\|_{s,\infty,1} \cdot R^{-s}.
	\end{equation}
\end{proof}

\subsection{Proof of Technical Lemmas}

\begin{proof}[Proof of Lemma~\ref{lemma: variance}.]
	Firstly note that
	\begin{equation}
		\begin{aligned}
			\big|\sigma_{f^N,\bepsilon}(\btheta) - \sigma_{f,\bepsilon}(\btheta)\big| &= n^{-1/2} \cdot \big| \sqrt{\langle \bSigma \nabla f^N(\btheta), \nabla f^N(\btheta)\rangle} -  \sqrt{\langle \bSigma \nabla f(\btheta), \nabla f(\btheta)\rangle} \big| \\
			& = n^{-1/2} \cdot \big| \|\bSigma^{1/2} \nabla f^N(\btheta)\|  -  \|\bSigma^{1/2} \nabla f(\btheta)\|\big| \\
			& \leq n^{-1/2} \cdot \| \bSigma^{1/2} (\nabla f^N (\btheta) - \nabla f(\btheta)) \|  \\
			& \leq n^{-1/2}  \| \bSigma\|_{op} ^{1/2} \| (\nabla f^N (\btheta) - \nabla f(\btheta)) \| .
		\end{aligned}
	\end{equation}
	On the other hand, by the definition of $\big\| f \big\|_{s,\infty,1}$
	\begin{equation}
		\begin{aligned}
			\big \| (\nabla f^N (\btheta) - \nabla f(\btheta)) \big\| & \leq (2\pi)^{-d/2} \int_{\RR^d \backslash \Omega} \big| \calF f(\bzeta)\big| \cdot  \| \bzeta\|d\bzeta \\
			&  \leq \big\| f \big\|_{s,\infty,1} \cdot R^{1-s}.
		\end{aligned}
	\end{equation}
	Therefore, given $R \leq \sqrt{n}$ and $\big\| f \big\|_{s,\infty,1} \cdot R^{-s} = o(n^{-1/2})$ when $s>1/(1-\alpha)$, we get
	\begin{equation}
		\sigma_{f^N,\bepsilon}(\btheta)  = \sigma_{f,\bepsilon}(\btheta) + o\big( n^{-1/2} \big).
	\end{equation}
\end{proof}

\begin{proof}[Proof of Lemma~\ref{Lemma: lowerbound_1}]
	\begin{equation}
		\begin{aligned}
			\Big| \frac{ \sigma^2_{f,\bepsilon}(\btheta)}{ \sigma^2_{f,\bepsilon}(\btheta_0)} -1 \Big| &= \frac{c n^{-1} \cdot |\langle \bSigma \nabla f(\btheta), \nabla f(\btheta)\rangle - \langle \bSigma \nabla f(\btheta_0), \nabla f(\btheta_0)\rangle|}{ \sigma^2_{f,\bepsilon}(\btheta_0)} \\
			& \leq \frac{cn^{-1} \cdot \|\bSigma\|_{op} \|\nabla f(\btheta) - \nabla f(\btheta_0)\| (\|\nabla f(\btheta)\| + \|\nabla f(\btheta_0)\|)}{ \sigma^2_{f,\bepsilon}(\btheta_0)}.
		\end{aligned}
	\end{equation}
	For the term $n^{-1/2}\cdot \|\nabla f(\btheta) - \nabla f(\btheta_0)\|$ we decompose it into two parts,
	\begin{equation}
		\label{bound: variance1}
		\begin{aligned}
			n^{-1/2} \cdot \|\nabla f(\btheta) - \nabla f(\btheta_0)\| & \leq (2\pi)^{-d/2} \int_{\Omega} \Big| \calF f(\bzeta) \Big| \cdot \Big| e^{i\bzeta^T\btheta} - e^{i\bzeta^T\btheta_0}\Big|\cdot \|\bzeta\|/\sqrt{n} d\bzeta \\
			& + (2\pi)^{-d/2} \int_{ \Omega^c} \Big| \calF f(\bzeta) \Big| \cdot \Big| e^{i\bzeta^T\btheta} - e^{i\bzeta^T\btheta_0}\Big|\cdot \|\bzeta\|/\sqrt{n} d\bzeta
		\end{aligned}
	\end{equation}
	where $\Omega:= \{ \bxi: \|\bxi\| \leq \sqrt{n} \}$.
	For the first term on the right hand side,
	\begin{equation}
		\begin{aligned}
			& (2\pi)^{-d/2} \int_{\Omega} \Big| \calF f(\bzeta) \Big| \cdot \Big| e^{i\bzeta^T\btheta} - e^{i\bzeta^T\btheta_0}\Big|\cdot \|\bzeta\|/\sqrt{n} d\bzeta \\
			&  \lesssim (2\pi)^{-d/2} \int_{\RR^d} \Big| \calF f(\bzeta) \Big| \cdot \|\bzeta\|^2/\sqrt{n} d\bzeta \cdot \|\btheta - \btheta_0 \| \\
			&  \lesssim (2\pi)^{-d/2} \int_{\RR^d} \Big| \calF f(\bzeta) \Big| \cdot \|\bzeta\|^2\cdot c n^{-1} d\bzeta  \\
			& \lesssim c \big\| f \big\|_{s,\infty,1} \cdot n^{-(2 \wedge s)/2}.
		\end{aligned}
	\end{equation}
	While for the second term, under the condition
	\begin{equation}
		\begin{aligned}
			& (2\pi)^{-d/2} \int_{ \Omega^c} \Big| \calF f(\bzeta) \Big| \cdot \Big| e^{i\bzeta^T\btheta} - e^{i\bzeta^T\btheta_0}\Big|\cdot \|\bzeta\|/\sqrt{n} d\bzeta \\
			& \leq 2 (2\pi)^{-d/2} \int_{ \Omega^c} \Big| \calF f(\bzeta) \Big| \cdot \|\bzeta\|/\sqrt{n} d\bzeta \\
			& \lesssim \big\| f \big\|_{s,\infty,1} \cdot n^{-s/2} .
		\end{aligned}
	\end{equation}
	Under the condition $n^{-1/2} \cdot \big\| f \big\|_{s,\infty,1} \leq \tau \sigma_{f,\bepsilon}(\btheta_0)$, we obtain
	\begin{equation}
		\frac{n^{-1/2} \cdot \|\nabla f(\btheta) - \nabla f(\btheta_0)\|}{\sigma_{f,\bepsilon}(\btheta_0)} \lesssim c\tau  n^{-(2\wedge s)/2 + 1/2}.
	\end{equation}
	On the other hand, a natural bound of $\|\nabla f(\btheta)\|$ is
	\begin{equation}
		\label{bound: variance2}
		\begin{aligned}
			\| \nabla f(\btheta) \| & = (2\pi)^{-d/2} \int_{\RR^d} \Big| \calF f(\bzeta) \Big| \cdot \|\bzeta\| d\bzeta \leq \big\| f \big\|_{s,\infty,1}  .
		\end{aligned}
	\end{equation}
	Combining the results in (\ref{bound: variance1}) and (\ref{bound: variance2}) yields
	\begin{equation*}
		\Big| \frac{ \sigma^2_{f,\bepsilon}(\btheta)}{ \sigma^2_{f,\bepsilon}(\btheta_0)} -1 \Big| \lesssim c \tau^2\cdot  n^{-(2\wedge s)/2 + 1/2} .
	\end{equation*}
\end{proof}

\bibliographystyle{plain}
\bibliography{refer}

\newpage
\appendix
\section{Appendix}

\begin{table}[!htb]
	\centering
	\begin{center}
		\begin{tabular}{|c|c|c|c|c|c|}
			\hline\hline
			$\alpha$-value & $d$-dimension & Smoothness Threshold & Plug-in Bias & TF Bias & Adaptive Bias\\ \hline
			0.40             &          40         &               1.6667     &   0.0043  &  0.0003  &  0.0003\\ \hline
			0.45             &          63         &               1.8182     &   0.0057  &  0.0003  &  0.0006 \\ \hline
			0.50             &         100         &               2.0000     &   0.0098  &  0.0011  &  0.0019 \\ \hline
			0.55             &         158         &               2.2222     &   0.0178  &  0.0014  &  0.0030 \\ \hline
			0.60             &         251         &               2.5000     &   0.0264  &  0.0034  &  0.0067  \\ \hline
			0.65             &         398         &               2.8571     &   0.0486  &  0.0031  &  0.0096 \\ \hline
			0.70             &         631         &               3.3333     &   0.0843  &  0.0075  &  0.0166 \\ \hline
			0.75             &         1000        &               4.0000     &   0.1515  &  0.0166  &  0.0380 \\ \hline
			0.80           &           1585        &               5.0000     &   0.3415  &  0.0191  &  0.0515 \\ \hline
			0.85           &           2512        &               6.6667     &   1.2494  &  0.0384  &  0.0825 \\ \hline
			\hline
		\end{tabular}
		\caption{Bias Comparison: $h_1(x) = (2x)*x^{2.75}$}
		\label{tb:adp_s2_bias}
	\end{center}
\end{table}

\begin{table}[!htb]
	\centering
	\begin{center}
		\begin{tabular}{|c|c|c|c|c|c|}
			\hline\hline
			$\alpha$-value & $d$-dimension & Smoothness Threshold & Plug-in & TF & Adaptive \\ \hline
			0.40             &          40         &               1.6667     &   0.0018  &  0.00003  &  0.0002\\ \hline
			0.45             &          63         &               1.8182     &   0.0028  &  0.00006  &  0.0002\\ \hline
			0.50             &         100         &               2.0000     &   0.0045  &  0.00007  &  0.0006\\ \hline
			0.55             &         158         &               2.2222     &   0.0073  &  0.00015  &  0.0012\\ \hline
			0.60             &         251         &               2.5000     &   0.0136  &  0.0008   &  0.0029\\ \hline
			0.65             &         398         &               2.8571     &   0.0246  &  0.0007   &  0.0045\\ \hline
			0.70             &         631         &               3.3333     &   0.0511  &  0.0021   &  0.0097\\ \hline
			0.75             &         1000        &               4.0000     &   0.1229  &  0.0036   &  0.0137\\ \hline
			0.80           &           1585        &               5.0000     &   0.3909  &  0.0066   &  0.0177\\ \hline
			0.85           &           2512        &               6.6667     &   1.2823  &  0.0091   &  0.0198\\ \hline
			\hline
		\end{tabular}
		\caption{Bias Comparison: $h_1(x) = (2x)^{3.75}$}
		\label{tb:adp_s3_bias}
	\end{center}
\end{table}

\begin{table}[!htb]
	\centering
	\begin{center}
		\begin{tabular}{|c|c|c|c|c|c|}
			\hline\hline
			$\alpha$-value & $d$-dimension & Smoothness Threshold & Plug-in & TF & Adaptive \\ \hline
			0.40             &          40         &               1.6667     &   0.0015  &  0.0015  &  0.0015\\ \hline
			0.45             &          63         &               1.8182     &   0.0024  &  0.0022  &  0.0022\\ \hline
			0.50             &         100         &               2.0000     &   0.0044  &  0.0038  &  0.0037\\ \hline
			0.55             &         158         &               2.2222     &   0.0091  &  0.0067  &  0.0065\\ \hline
			0.60             &         251         &               2.5000     &   0.0172  &  0.0105  &  0.0098\\ \hline
			0.65             &         398         &               2.8571     &   0.0542  &  0.0246  &  0.0216\\ \hline
			0.70             &         631         &               3.3333     &   0.1934  &  0.0578  &  0.0443\\ \hline
			0.75             &         1000        &               4.0000     &   0.7134  &  0.0885  &  0.0511\\ \hline
			0.80           &           1585        &               5.0000     &   24.170  &  0.3082  &  0.0951\\ \hline
			0.85           &           2512        &               6.6667     &   88.785  &  0.4164  &  0.2507\\ \hline
			\hline
		\end{tabular}
		\caption{MSE Comparison: $h_1(x) = (2x)^{2.75}$}
		\label{tb:adp_s2_mse}
	\end{center}
\end{table}

\begin{table}[!htb]
	\centering
	\begin{center}
		\begin{tabular}{|c|c|c|c|c|c|}
			\hline\hline
			$\alpha$-value & $d$-dimension & Smoothness Threshold & Plug-in  & TF  & Adaptive  \\ \hline
			0.40             &          40         &               1.6667     &   0.000121  &  0.000115  &  0.000114 \\ \hline
			0.45             &          63         &               1.8182     &   0.000236  &  0.000208  &  0.000204 \\ \hline
			0.50             &         100         &               2.0000     &   0.0004  &  0.0003  &  0.0003 \\ \hline
			0.55             &         158         &               2.2222     &   0.0010  &  0.0006  &  0.00056 \\ \hline
			0.60             &         251         &               2.5000     &   0.0035  &  0.0014  &  0.0011  \\ \hline
			0.65             &         398         &               2.8571     &   0.0153  &  0.0038  &  0.0026 \\ \hline
			0.70             &         631         &               3.3333     &   0.1024  &  0.0068  &  0.0030 \\ \hline
			0.75             &         1000        &               4.0000     &   1.5365  &  0.0318  &  0.0055 \\ \hline
			0.80           &           1585        &               5.0000     &   33.303  &  0.0692  &  0.0062 \\ \hline
			0.85           &           2512        &               6.6667     &   499.69  &  0.2806  &  0.1063 \\ \hline
			\hline
		\end{tabular}
		\caption{MSE Comparison: $h_2(x) = (2x)^{3.75}$}
		\label{tb:adp_s3_mse}
	\end{center}
\end{table}

\begin{table}[!htb]
	\centering
	\begin{center}
		\begin{tabular}{|c|c|c|c|c|c|}
			\hline\hline
			$\alpha$-value & $d$-dimension & Smoothness Threshold & Plug-in & TF &  Adaptive \\ \hline
			0.40             &          40         &               1.6667     &   0.0014  &  0.0015  &  0.0015 \\ \hline
			0.45             &          63         &               1.8182     &   0.0023  &  0.0022  &  0.0021 \\ \hline
			0.50             &         100         &               2.0000     &   0.0043  &  0.0038  &  0.0037 \\ \hline
			0.55             &         158         &               2.2222     &   0.0088  &  0.0067  &  0.0065 \\ \hline
			0.60             &         251         &               2.5000     &   0.0165  &  0.0104  &  0.0098  \\ \hline
			0.65             &         398         &               2.8571     &   0.0519  &  0.0246  &  0.0216 \\ \hline
			0.70             &         631         &               3.3333     &   0.1863  &  0.0578  &  0.0441 \\ \hline
			0.75             &         1000        &               4.0000     &   0.6905  &  0.0882  &  0.0496 \\ \hline
			0.80           &           1585        &               5.0000     &   24.030  &  0.3078  &  0.0919 \\ \hline
			0.85           &           2512        &               6.6667     &   88.069  &  0.4149  &  0.2450 \\ \hline
			\hline
		\end{tabular}
		\caption{Variance Comparison: $h_1(x) = (2x)^{2.75}$}
		\label{tb:adp_s2_var}
	\end{center}
\end{table}

\begin{table}[!htb]
	\centering
	\begin{center}
		\begin{tabular}{|c|c|c|c|c|c|}
			\hline\hline
			$\alpha$-value & $d$-dimension & Smoothness Threshold & Plug-in  & TF  &  Adaptive \\ \hline
			0.40             &          40         &               1.6667     &   0.00012  &  0.0001  &  0.0001 \\ \hline
			0.45             &          63         &               1.8182     &   0.00023  &  0.0002  &  0.0002 \\ \hline
			0.50             &         100         &               2.0000     &   0.00042  &  0.0003  &  0.0003\\ \hline
			0.55             &         158         &               2.2222     &   0.00095  &  0.0006  &  0.00055 \\ \hline
			0.60             &         251         &               2.5000     &   0.0033   &  0.0014   &  0.0011\\ \hline
			0.65             &         398         &               2.8571     &   0.0147   &  0.0026   &  0.0038 \\ \hline
			0.70             &         631         &               3.3333     &   0.0998   &  0.0070   &  0.0037 \\ \hline
			0.75             &         1000        &               4.0000     &   1.5214   &  0.0318   &  0.0054 \\ \hline
			0.80           &           1585        &               5.0000     &   33.152   &  0.1685   &  0.0059\\ \hline
			0.85           &           2512        &               6.6667     &   498.05   &  0.2805   &  0.1059\\ \hline
			\hline
		\end{tabular}
		\caption{Variance Comparison: $h_2(x) = (2x)^{3.75}$}
		\label{tb:adp_s3_var}
	\end{center}
\end{table}

\end{document}